\documentclass[11pt]{amsart}
\usepackage[margin=1.5in]{geometry}

\usepackage{adjustbox}
\usepackage{float}

\date{\today}

\usepackage{amsmath}
\usepackage{amsfonts}
\usepackage{amssymb}
\usepackage{amsthm}
\usepackage{enumerate}
\usepackage{color}
\usepackage{stmaryrd}
\usepackage{slashed}
\usepackage{relsize}
\usepackage{geometry}
\usepackage{tikz-cd}
\usepackage{mathtools}
\usepackage{xcolor}
\definecolor{darkgreen}{rgb}{0,0.50,0} 
\definecolor{darkred}{rgb}{0.55,0,0}
\definecolor{darkblue}{rgb}{0,0,0.6}
\definecolor{darkteal}{rgb}{0.0, 0.25, 0.5}

\usepackage{caption}
\usepackage{enumitem}

\definecolor{color1}{RGB}{12, 12, 149}
\definecolor{color2}{RGB}{12, 32, 161}
\definecolor{color3}{RGB}{9, 146, 212}
\definecolor{color4}{RGB}{8, 184, 226}
\definecolor{color5}{RGB}{28, 253, 175}
\definecolor{color6}{RGB}{39, 255, 151}

\usetikzlibrary{automata,arrows,calc,positioning}

\usepackage{array}

	\definecolor{darkgreen}{rgb}{0,0.50,0} 
	\definecolor{darkred}{rgb}{0.55,0,0}
	\definecolor{darkblue}{rgb}{0,0,0.6}

	\usepackage[pdfborder=0,pagebackref,colorlinks,citecolor=darkgreen,linkcolor=darkred,urlcolor=darkblue]{hyperref}
	\usepackage{cleveref}

	\newtheorem{thm}{Theorem}[section]
	\newtheorem{theorem}[thm]{Theorem}
	\newtheorem{corollary}[thm]{Corollary}
	
	\newtheorem{proposition}[thm]{Proposition}
	\newtheorem{lem[thm]}{Lemma}
	\newtheorem{lemma}[thm]{Lemma}
    \newtheorem{notn}[thm]{Notation}
	\theoremstyle{definition}
	\newtheorem{definition}[thm]{Definition}

	\newtheorem{example}[thm]{Example}

    	\newtheorem{question}[thm]{Question}
	
	\newtheorem{mnthm}[thm]{Theorem}
	
	\crefname{mnthm}{Theorem}{Theorems}

	\usepackage{verbatim}
	
	\usepackage[normalem]{ulem}
	
	\newcommand{\Z}{\mathbb{Z}}

	\newcommand{\R}{\mathbb{R}}

	\newcommand{\Pin}{\operatorname{Pin}}
	\newcommand{\rank}{\operatorname{rank}}
	\newcommand{\Sq}{\operatorname{Sq}}
	
	\newcommand{\colim}{\operatorname{colim}}
	
	\newcommand{\kerv}{\operatorname{kerv}}

	\newcommand{\SKK}{\mathrm{SKK}}

	\newcommand{\SKKxi}{\mathrm{SKK}^\xi}
	\newcommand{\RP}{\mathbb{RP}}
	
	\newcommand{\HP}{\mathbb{HP}}

	\DeclareMathOperator\BO{BO}

	\usepackage{booktabs}
	
	\usepackage{bm}
	
	\author[R. S. Hoekzema]{Renee  S. Hoekzema}
	\address{Department of Mathematics, Vrije Universiteit Amsterdam, Netherlands}
	\email{r.s.hoekzema@vu.nl}
	\author[L. Stehouwer]{Luuk Stehouwer}
	\address{Durham University, United Kingdom}
	\email{stehouwer.d.luuk@durham.ac.uk}
	\author[S. Vesel\'a]{Simona Vesel\'a}
	\address{Max Planck Institute for Mathematics, Germany}
	\email{vesela@math.uni-bonn.de}

	\title{The Euler characteristic of $\Pin^{\pm}$ manifolds}

\begin{document}

		\begin{abstract} 
			We completely determine the possible parities of the Euler characteristic of closed $\Pin^{\pm}$-manifolds of a given dimension. As an application we partially calculate the controlled cut-and-paste groups $\SKK_{2k+1}^{\Pin^{\pm}}$, which were previously undetermined. 
		\end{abstract}
		
		\maketitle
		
		\tableofcontents

		\section{Introduction}

		In every even dimension there exist closed manifolds of every possible Euler characteristic. Orientable closed manifolds, however, are restricted to having even Euler characteristic in dimensions $2 \pmod 4$. Similarly imposing other orientable tangential structures such as $BSpin,BString,\cdots$ restricts the dimensions in which the Euler characteristic can be odd further, see \cite{ReneeEulerChar}. What restrictions on the Euler characteristic are imposed by different tangential structures is not known in general. This question is only of interest in even dimensions, as odd-dimensional closed manifolds have Euler characteristic zero.

		\subsection{$\Pin^+$, $\Pin^-$ structures.}
		 Pin groups are the non-orientable analogues of the Spin groups.

The tangential structure $B\Pin^+ \to \BO$ can be defined as the fiber of $w_2 \colon \BO \to B^2\Z/2$ and $B\Pin^-$ as the fiber of $w_2 + w_1^2 \colon \BO \to B^2\Z/2$, see \Cref{subsec:TangentialStr}. They are more classically defined using the $\colim_{n\to\infty}\Pin^{\pm}_n$ of the $\Pin^{\pm}_n$- groups, which are in turn defined using Clifford algebras~\cite[Chapter I]{MR1031992}. In recent years $\Pin^{\pm}$ groups have gotten more attention in the physics literature due to their application to time-reversal symmetric systems~\cite{MFCV14, KTTW15, Witten16parity, KPT20, freedhopkins}.

                Our main result is the following.
		
		\begin{mnthm}\label{mnthm:Mainthm}
			Let $\xi$ be the $\Pin^+$ or the $\Pin^-$ structure and let $n$ be an even integer. Then the statement
			\[\text{All }n\text{-dimensional } \xi\text{-manifolds have even Euler characteristic.}\] is true if and only if  $\xi=\Pin^+$ and $n\equiv 2,6\pmod{8}$ or, $\xi=\Pin^-$ and $n\equiv 4\pmod{8}$.
		\end{mnthm}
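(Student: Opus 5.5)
The plan rests on the classical fact that for a closed $n$-manifold $M$, $\chi(M)\equiv w_n(M)[M]\pmod 2$. With $n=2k$, Wu's formula $w=\Sq(v)$ combined with the vanishing of $v_i$ for $i>k$ gives $w_n=\Sq^k v_k=v_k^2$. So the parity of the Euler characteristic is the Stiefel--Whitney number $v_k^2[M]$. The proof then splits into two parts. First, we construct $\xi$-manifolds with odd Euler characteristic in every pair $(\xi,n)$ not listed in \Cref{mnthm:Mainthm}. Second, we show that $v_k^2[M]=0$ in the listed cases.

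For the constructions we use $\RP^n$ and $\HP^{m}$. Write $a$ for the generator of $H^1(\RP^n;\Z/2)$. Since $w(\RP^n)=(1+a)^{n+1}$, we have $w_2=\binom{n+1}{2}a^2$ and $w_2+w_1^2=\binom{n+2}{2}a^2$. Hence $\RP^n$ admits a $\Pin^+$ structure for $n\equiv 0\pmod 4$ and a $\Pin^-$ structure for $n\equiv 2\pmod 4$, and its Euler characteristic is $1$ in these even dimensions. The manifold $\HP^{2j}$ is spin, hence both $\Pin^+$ and $\Pin^-$, and $\chi(\HP^{2j})=2j+1$ is odd; this covers $n\equiv 0\pmod 8$ for both structures. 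A product of a $\Pin^\pm$ manifold with a spin manifold is again $\Pin^\pm$, because the cross terms in $w_2$ and $w_1^2$ vanish when $w_1=w_2=0$ on one factor. Therefore:
\begin{itemize}
\item $\RP^4\times\HP^{2j}$ handles $\Pin^+$ with $n\equiv 4\pmod 8$;
\item $\RP^{4m+2}$ handles $\Pin^-$ with $n\equiv 2,6\pmod 8$.
\end{itemize}
Together with $\HP^{2j}$ for $n\equiv 0\pmod 8$, this settles the ``only if'' direction.

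For the vanishing results, the input is $v_2=w_2+w_1^2$ and $v_1=w_1$. On a $\Pin^-$ manifold $v_2=0$, so $\Sq^2$ vanishes into the top degree. On a $\Pin^+$ manifold $v_2=w_1^2$, so $\Sq^2x=w_1^2x$ for $x$ of degree $n-2$. We then rewrite $\Sq^k v_k$ using Adem relations so that each term ends with a $\Sq^1$ or $\Sq^2$ applied last.
\begin{itemize}
\item For $\Pin^+$ with $k$ odd, use $\Sq^k=\Sq^1\Sq^{k-1}$. This gives $v_k^2[M]=(w_1\cdot\Sq^{k-1}v_k)[M]$, and we then continue reducing $\Sq^{k-1}$ via $\Sq^2\Sq^{k-3}=\Sq^{k-1}+\Sq^{k-2}\Sq^1$ to trade everything into powers of $w_1$ times lower classes.
\item For $\Pin^-$ with $k=4j+2$, use $\Sq^2\Sq^{4j}=\Sq^{4j+2}+\Sq^{4j+1}\Sq^1$. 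The first term contributes nothing since $v_2=0$, and $\Sq^{4j+1}=\Sq^1\Sq^{4j}$ reduces the second to $(w_1\Sq^{4j}\Sq^1v_k)[M]$.
\end{itemize}

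The main obstacle is closing these recursions. One needs a good formula for the Wu classes $v_i$ of $\Pin^\pm$ manifolds; for example, one could first show that they lie in the subalgebra generated by $w_1$ and the $\Sq$-images of classes killed by $w_2$ (resp.\ $w_2+w_1^2$). One then needs to verify that all resulting terms pair to zero with $[M]$. If the direct Steenrod-algebra manipulation becomes unwieldy, the fallback is bordism theoretic. Since $w_n[M]$ is a bordism invariant, it suffices to check it on generators of $\Omega^{\Pin^\pm}_n\otimes\Z/2$, or on its image in unoriented bordism. Using the Anderson--Brown--Peterson/Kirby--Taylor splittings $M\Pin^-\simeq M\mathrm{Spin}\wedge\Sigma^{-1}MO(1)$ and $M\Pin^+\simeq M\mathrm{Spin}\wedge\Sigma^{1}MTO(1)$, one computes the image of the $\Z/2$-cohomology class $w_n$ in $H^*(M\mathrm{Spin};\Z/2)\otimes H^*(\Sigma^{\mp1}MO(1))$. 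One then shows, via the $\mathcal{A}(1)$-module structure (the relevant classes are in $\Sq^1,\Sq^2$-images and hence pair trivially with the Hurewicz image), that it vanishes exactly in the stated dimensions.
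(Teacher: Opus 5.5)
Your examples are correct and match the paper's. For $\Pin^+$ in dimension $8j+4$ the product with $\HP^{2j}$ is unnecessary, since your own computation already shows $\RP^{8j+4}$ is $\Pin^+$. The gap is that the entire ``if'' direction is left open.

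\textbf{$\Pin^-$, dimension $8j+4$.} You stop at $(w_1\Sq^{4j}\Sq^1 v_k)[M]$, because rewriting $\Sq^{4j+1}=\Sq^1\Sq^{4j}$ introduces a $w_1$ you cannot remove. The missing step is the second Adem relation $\Sq^2\Sq^{4j-1}=\Sq^{4j+1}+\Sq^{4j}\Sq^1$. Combined with $\Sq^1\Sq^1=0$ it gives $\Sq^{4j+1}\Sq^1=\Sq^2\Sq^{4j-1}\Sq^1$, hence $\Sq^{4j+2}=\Sq^2(\Sq^{4j}+\Sq^{4j-1}\Sq^1)$ for $j\ge 1$. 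So $v_{4j+2}=0$ on every $\Pin^-$ manifold.

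\textbf{$\Pin^+$, dimension $8j+6$.} Your uniform ``$k$ odd'' recursion only closes once you see that $\Sq^3$ into the top degree vanishes. This holds because $v_3=w_3+\Sq^1w_2=w_1w_2$ is zero when $w_2=0$. Then $\Sq^{4j+3}=\Sq^1\Sq^{4j+2}=\Sq^3\Sq^{4j}$ gives $v_{4j+3}=0$, which is the paper's argument.

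\textbf{$\Pin^+$, dimension $8j+2$.} This is the hard case, and your proposed route fails there concretely. The relation $\Sq^2\Sq^{k-3}=\Sq^{k-1}+\Sq^{k-2}\Sq^1$ holds only for $k\equiv 3\pmod 4$. For $k=4j+1$ the Adem coefficient $\binom{4j-3}{2}$ is even, so $\Sq^2\Sq^{4j-2}=\Sq^{4j-1}\Sq^1$ and you get nothing; indeed $\Sq^{4j}$ is indecomposable whenever $4j$ is a power of $2$.

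This is forced. $K^2\times\HP^{2j}$ (with $K^2$ the Klein bottle) is $\Pin^+$ with $v_{4j+1}\neq 0$, so $\Sq^{4j+1}$ into the top degree is not identically zero. No Adem-relation rewriting can succeed; you must use what the specific class $v_{4j+1}$ is, and your ``good formula for the Wu classes'' is exactly the missing content.

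The paper supplies it universally via Dold's theorem. Relations among Stiefel--Whitney numbers of $n$-manifolds are generated by $\Sq^{n-m}(p)+v_{n-m}p$, so it suffices to show $w_{8j+2}\in I_n$ modulo $(\mathcal{A}w_2)_n$. Using $v_4\equiv w_1^4+w_4$, the relations $\Sq^4(w_1^{2i}w_{8j-2i-2})+v_4w_1^{2i}w_{8j-2i-2}$ reduce to the $\mathfrak{r}_{2i}$, which sum over $0\le i\le 4j-1$ to $w_{8j+2}+w_1^{8j}w_2$.

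Your bordism fallback does not close the gap. The claim that $U\cdot w_n$ lies in the $\Sq^1,\Sq^2$-image in $H^*(MT\Pin^\pm;\Z/2)$ is asserted, not computed, and it is the whole content. Establishing even membership in the $\bar{\mathcal{A}}$-image requires a computation of the kind the paper does via Dold's theorem. Checking on generators of $\Omega^{\Pin^\pm}_n$ in all dimensions is also not a finite task as stated.
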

        
        In dimensions where the Euler characteristic is restricted to be even, this theorem is a consequence of \Cref{thm:8kp2PinPlus}, \Cref{prop:PinMinusEulerChar} and \Cref{prop:pinplusdim6}.
In \Cref{sec:ProjSpaces}, we provide examples of closed $\Pin^{\pm}$-manifolds with odd Euler characteristic in all remaining dimensions. 
We show that $n$-dimensional $\Pin^{\pm}$-manifolds have even Euler characteristic by showing that admitting a $\Pin^{\pm}$ structure implies that the top Stiefel-Whitney class $w_n$ vanishes.
For $\Pin^{+}$ in dimensions $6 \pmod{8}$ and $\Pin^{-}$ in dimensions $4 \pmod{8}$, we do this by showing that the top dimensional Wu class $v_{n/2}$ vanishes, which implies that $w_n=v_{n/2}^2$ vanishes as well. For $\Pin^{+}$ in dimensions $2 \pmod{8}$, the top Wu class need not vanish. For example, the Klein bottle has nonzero $v_1=w_1$ but $w_2=0$, see also \Cref{lem:Pin+counterexamples}.
Nonetheless we obtain, using a more complicated calculation, that $w_n$ vanishes in these dimensions. This is done using a theorem of Dold, see \Cref{thm:Dold}.

		The following is a table summarising the results of \Cref{mnthm:Mainthm}, with examples of manifolds with odd Euler characteristic in the admissible dimensions.
        
		\begin{table}[h!]
			\begin{tabular}{|l|l | l|}
				\hline
				dim & $\Pin^-$ &$\Pin^+$ 
				\\\hline 
				$8k$& 
				$\mathbb{HP}^{2k}$ & $\mathbb{RP}^{8k}$, $\mathbb{HP}^{2k}$
				\\
				$8k+2$& 
				$\mathbb{RP}^{8k+2}$  & - (Thm. \ref{thm:8kp2PinPlus})
				\\
				$8k+4$& 
				- (Cor. \ref{prop:PinMinusEulerChar}) & $\mathbb{RP}^{8k+4}$
				\\
				$8k+6$& 
				$\mathbb{RP}^{8k+6}$  & - (Lem. \ref{prop:pinplusdim6})
				\\
				\hline
			\end{tabular}
			\caption{Examples of $\Pin^{\pm}$-manifolds with odd Euler characteristic in different dimensions, with dashes indicating that no such manifolds exist.}
			\label{table:StrucutresOfProjSpaces}
		\end{table}
	
	\subsection{Cut and paste groups}\label{subsec:SKKIntro}
		Our results have implications for controlled cut-and-paste groups, also known as SKK groups, which we studied in detail in \cite{HSV2025}. The same groups are studied in many other places in the literature, e.g.  \cite{MR153021, skbook, bokstedtsvane, schommerpriesinvertible, MR4565254, KST}. SKK groups are of particular interest to physics because they classify non-unitary topological quantum field theories \cite{KST,  HSV2025}.
        
		Let $\xi\colon B\to \BO$ be a fibration and let a $\xi$-manifold be a manifold together with a lift of the classifying map of its tangent bundle along $\xi$. Let $n\geq 0$ be an integer.
		
		\begin{definition}
			The group $\SKK_n^{\xi}$ is defined by quotienting the monoid of $n$-dimensional $\xi$-manifolds by the relation
			\[M_1\cup M_3+ M_2\cup M_4\sim_{\SKK} M_1\cup M_4+ M_2\cup M_3,\]
			where $M_1, M_2, M_3, M_4$ are compact manifolds the identification of their boundaries $\partial M_1\cong \partial M_2\cong \partial M_3\cong \partial M_4$.
		\end{definition}

        As a consequence of \Cref{mnthm:Mainthm} and the work in \cite{HSV2025} we obtain the following result for SKK groups of $\Pin^\pm$-manifolds in odd dimensions, see \Cref{cor:SKKPin}:
        \begin{mnthm}
For $n\geq 1$ an odd integer, we have
			\begin{eqnarray*}
			\SKK_n^{Pin^-} &\cong&     \begin{cases}
            \Omega_{n}^{\Pin^-}	&\text{ if } n=1,5,7 \pmod{8}\\
				\Omega_{n}^{\Pin^-}\oplus\Z/2& \text{ if }  n=3 \pmod{8}
			\end{cases}\\
\SKK_n^{Pin^+} &\cong& \begin{cases}
				\Omega_{n}^{\Pin^+}&\text{ if }n=3,7 \pmod{8}\\
				\Omega_{n}^{\Pin^+}\oplus\Z/2& \text{ if }n=5 \pmod{8} 
\end{cases}
            \end{eqnarray*}
            and lastly for $\Pin^+$-manifolds in dimension $n= 1 \pmod{8}$ there is a short exact sequence $\Z/2\to\SKK_n^{\Pin^+}\to \Omega_n^{\Pin^+}$.
        \end{mnthm}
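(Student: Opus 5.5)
The plan is to reduce everything to the general structure theory of odd-dimensional SKK groups from \cite{HSV2025}, with the Euler characteristic parity results of \Cref{mnthm:Mainthm} as input. For any tangential structure $\xi$ there is a natural surjection $\SKK_n^{\xi}\to\Omega_n^{\xi}$. I would recall from \cite{HSV2025} that for $n$ odd its kernel is cyclic and generated by the class of the sphere $[S^n]$ with a $\xi$-structure that bounds the disk. Doubling the disk $D^{n+1}$ shows that $2[S^n]=0$, so the kernel is either $0$ or $\Z/2$.

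The first step is to identify exactly when this kernel vanishes. Using the SKK relation, if $W^{n+1}$ is a compact $\xi$-manifold with boundary, then cutting and regluing changes a class by a multiple of $[S^n]$ governed by Euler characteristics. I would invoke the criterion in \cite{HSV2025} that $[S^n]=0$ in $\SKK_n^\xi$ if and only if there is a closed $(n+1)$-dimensional $\xi$-manifold with odd Euler characteristic.

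Plugging in \Cref{mnthm:Mainthm} with $n+1$ in place of $n$ then settles the kernel in each case.
\begin{itemize}
\item For $\Pin^-$: the kernel is $\Z/2$ exactly when $n+1\equiv 4\pmod 8$, that is, $n\equiv 3\pmod 8$.
\item For $\Pin^+$: the kernel is $\Z/2$ exactly when $n+1\equiv 2,6\pmod 8$, that is, $n\equiv 1,5\pmod 8$.
\end{itemize}
In all other odd dimensions the examples of \Cref{sec:ProjSpaces} ($\RP^{n+1}$ and $\HP^{(n+1)/4}$) kill $[S^n]$. This gives the isomorphisms $\SKK_n\cong\Omega_n$ in the cases $n\equiv1,5,7$ for $\Pin^-$ and $n\equiv3,7$ for $\Pin^+$. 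It also gives the short exact sequence $\Z/2\to\SKK_n^{\Pin^+}\to\Omega_n^{\Pin^+}$ for $n\equiv 1\pmod 8$.

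The remaining and main step is splitting the extension for $\Pin^-$ with $n\equiv 3$ and $\Pin^+$ with $n\equiv5$. To do this I would construct a homomorphism $\SKK_n^\xi\to\Z/2$ that is nonzero on $[S^n]$. The natural candidate is a Kervaire-type semicharacteristic, $\sum_{i\le (n-1)/2}\dim H_i(M;\Z/2)\bmod 2$. From \cite{HSV2025} I expect the criterion that this is an SKK-invariant (indeed a cobordism-compatible splitting) provided the Wu class $v_{(n+1)/2}$ vanishes for all $(n+1)$-dimensional $\xi$-manifolds. This vanishing is precisely what the proofs of \Cref{prop:PinMinusEulerChar} and \Cref{prop:pinplusdim6} establish in dimensions $4\pmod 8$ for $\Pin^-$ and $6\pmod 8$ for $\Pin^+$.

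In dimension $n+1\equiv 2\pmod8$ for $\Pin^+$ the Wu class need not vanish: the Klein bottle is a counterexample. There evenness of $\chi$ came only from Dold's theorem, so the semicharacteristic splitting is unavailable, and only the extension is claimed. The obstacle I anticipate is checking carefully that the semicharacteristic is additive under the SKK relation when $v_{(n+1)/2}=0$. This should amount to a Mayer–Vietoris count in which the correction term is the mod $2$ rank of the middle intersection form on the glued $(n+1)$-manifold. That rank equals $\langle v_{(n+1)/2}^2,[W]\rangle$, which vanishes under the hypothesis.
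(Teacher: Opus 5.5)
Your proposal is correct and is essentially the paper's proof. You combine the exact sequence of \Cref{thm:TheExactSequence} (kernel generated by $[S^n_b]$, trivial or $\Z/2$ according to whether some closed $(n+1)$-dimensional $\xi$-manifold has odd Euler characteristic) with \Cref{mnthm:Mainthm}, and then split the cases $\Pin^-$, $n\equiv 3$ and $\Pin^+$, $n\equiv 5 \pmod 8$ by the $\Z/2$-Kervaire semicharacteristic via \cite[Theorem B]{HSV2025}, using the vanishing top Wu classes from \Cref{prop:PinMinusEulerChar} and \Cref{prop:pinplusdim6}; the paper only cites that theorem, so your sketch of the semicharacteristic's SKK-invariance is extra, and if you carry it out the relevant $(n+1)$-manifolds have boundary, so the condition you need is that the middle-dimensional form is alternating (which $v_{(n+1)/2}=0$ gives), not an evaluation $\langle v_{(n+1)/2}^2,[W]\rangle$.
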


\subsection{Conventions} All manifolds are smooth and  compact. All structures on manifolds are tangential i.e.\ for $\xi\colon B\to \BO$, a manifold is a $\xi$-manifold if it is endowed with a $\xi$-structure on its tangent bundle. We note that in surgery theory {\it normal structures} are more common and in that case a tangential $\Pin^{+}$-structure corresponds to a normal $\Pin^-$-structure and the other way around.

\subsection*{Acknowledgements}
The authors would like to thank Matthias Kreck, Stephan Stolz and Peter Teichner for many discussions about their unpublished work \cite{KST}. SV would like to thank Viktor Vesel\'y for his help with programming.
RH is supported by the Dutch Research Council (NWO) through the grant
OCENW.XL.23.015. LS is supported by the ERC Consolidator Grant “SYMSPEC”. SV is supported by the Max Planck Institute for Mathematics in Bonn.

\subsection*{Statement on the use of artificial intelligence and programming}
No portion of this work was conceived of or written by artificial intelligence. A python code computing Dold and Steenrod relations was used to come up with the proof of \Cref{thm:8kp2PinPlus}.

		\section{Preliminaries}

        \subsection{Characteristic classes of manifolds}
		Let $O(n)$ be the group of orthogonal transformations of $\R^n$. Under the natural stabilisation map $O(n)\to O(n+1)$ we can define the colimit $
		O=\colim_{n\to\infty}O(n)$. Applying the bar construction we obtain a space $\BO$, which is the classifying space of stable vector bundles. The $\Z/2$-cohomology ring of $\BO$ is well known to be 
		\[H^*(\BO;\Z/2)\cong \Z/2[w_1,w_2,w_3,\cdots],\]
		where $w_i\in H^i(\BO;\Z/2)$ is the \emph{$i$-th Stiefel--Whitney class}.
        While free as generators of the cohomology ring, the Stiefel-Whitney classes are related by the action of the Steenrod algebra under the following formula, which follows from the Cartan formula and the splitting principle \cite{MilnorStasheff1974}
				\begin{equation}\label{eqn:Sqw}
			\Sq^i w_j = \sum_{k=0}^i {j-i + k -1 \choose k} w_{i-k} w_{j+k}.
		\end{equation}

		Let $M^n$ be a manifold. The classifying space of its tangent bundle gives a map $TM\colon M\to \BO(n) \to \BO$. Define $w_i(M)$ to be the pullback of $w_i$.         
        The \emph{Wu classes} $v_i(M)\in H^i(M;\Z/2)$ for $i=0,1,\cdots$ of $M^n$ are uniquely defined by the property that $\Sq^k(x_{n-k}) = v_k(M) x_{n-k}$ for all $x_{n-k}\in H^{n-k}(M;\Z/2)$. We have that $v_i(M)=0$ for $i>\frac{n}{2}$.

		The Wu classes of a manifold are related to its Stiefel--Whitney classes through the Wu formula $w = \Sq(v)$, or
        	\begin{equation}\label{eq:Wuformula}
w_1(M)+ \cdots + w_n(M)  = \left(\Sq^1 + \cdots + \Sq^{\lfloor\frac{n}{2}\rfloor} \right)(v_1(M) + \cdots + v_{\lfloor\frac{n}{2}\rfloor}(M) ).
		\end{equation} 
        This gives rise to a recursive formula of Wu classes in terms of the Stiefel--Whitney classes 
		\begin{equation}\label{eq:RecursiveFormula}
			v_i(M) = w_i(M) + \sum_{k=1}^{{\lfloor}i/2{\rfloor}} \Sq^{k} v_{i-k}(M).
		\end{equation}

		\begin{example}\label{ex:SmallWu}
		The first few Wu classes are
			\begin{align*}
				&v_1=w_1\\
				&v_2=w_2+w_1^2\\
				&v_3=w_3+\Sq^1(v_2)=w_3+\Sq^1(w_2)=w_2w_1.
			\end{align*}
		\end{example}

		\begin{lemma}\label{lem:EquatioWuWn}
			Let $M^{n}$ be a manifold of dimension $n=2k$. Then $w_n(M)=v_k(M)^2$. 
		\end{lemma}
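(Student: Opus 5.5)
We take the degree $n$ component of the Wu formula \eqref{eq:Wuformula} and show that all but one of its terms vanish. In degree $n=2k$ it reads
\[
w_n(M) = \sum_{i} \Sq^{n-i} v_i(M),
\]
where the sum runs over those $i$ with $0\le i\le \lfloor n/2\rfloor = k$ and $n-i\geq 0$.

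The first step is to discard the terms with $i<k$. For such $i$ we have $n-i > i = \deg v_i(M)$. By the instability axiom of the Steenrod squares, $\Sq^j x = 0$ whenever $j > \deg x$. Hence $\Sq^{n-i} v_i(M) = 0$ for every $i<k$.

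The only remaining term is $i=k$, which contributes $\Sq^{k} v_k(M)$. Since $\deg v_k(M) = k$, the other instability axiom, $\Sq^{\deg x} x = x^2$, gives $\Sq^k v_k(M) = v_k(M)^2$. Therefore $w_n(M) = v_k(M)^2$.

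As a cross-check we can use the defining property of Wu classes directly. Taking $x_{n-k} = v_k(M)$, which has degree $k = n-k$, the definition gives $v_k(M)\cdot v_k(M) = \Sq^k v_k(M)$, consistent with the above. The Wu formula is taken as given from the preliminaries. There is no real obstacle here; the only care needed is the degree bookkeeping, namely checking that $n-i>i$ exactly when $i<k$.
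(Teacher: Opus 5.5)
Your proof is correct and takes the same approach as the paper, which simply says the lemma follows directly from the Wu formula. You have written out that step: take the degree-$n$ component of $w=\Sq(v)$, use $v_i(M)=0$ for $i>k$, kill the terms with $i<k$ by instability, and identify the remaining term as $\Sq^k v_k(M)=v_k(M)^2$.
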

		\begin{proof}
			This follows directly from the Wu formula \cref{eq:Wuformula}.
		\end{proof}

		If $M^{2k}$ is a manifold, we will call the class $v_k(M)$ its \emph{top Wu class}.
		
		We can use formula (\ref{eq:RecursiveFormula}) to define \textit{universal} Wu classes in the cohomology of BO, $v_k \in H^k(\BO;\Z/2)$. Note that the formula 
                \begin{equation}
        \label{eq:Sqdoldrel}
            \Sq^k(w_{n-k}) = v_k x_{n-k}
        \end{equation} is in general not true universally.
        Indeed, filling in any degree $n-k$ polynomial in Stiefel-Whitney classes for $x_{n-k}$, we obtain a relation between characteristic classes that holds for any $n$-dimensional manifold, while it is not necessarily true in the cohomology of $BO$.
        It was shown by Dold that all relations between Stiefel--Whitney classes in $H^n(\BO;\Z/2)$ that hold for all $n$-dimensional manifolds are generated by precisely relations like \eqref{eq:Sqdoldrel}:
		\begin{theorem}\cite[{Satz 1}]{dold1956vollstandigkeit}\label{thm:Dold}
			The elements 
			\begin{equation}
				\Sq^{n-k}(p)-v_{n-k}\;p\in H^n(\BO;\Z/2)
				\label{eqn:Dold}
			\end{equation}
			for all elements $p\in H^k(\BO;\Z/2)$ generate all relations between Stiefel--Whitney numbers of $n$-dimensional manifolds.
		\end{theorem}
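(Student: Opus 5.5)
The proof combines Thom's structure theorem for $MO$ with a change-of-basis argument relating the Steenrod action on the Thom spectrum to the Dold elements. Write $\Phi\colon H^n(\BO;\Z/2)\to \operatorname{Hom}(\mathfrak{N}_n,\Z/2)$ for the map sending a polynomial $r$ in Stiefel--Whitney classes to the Stiefel--Whitney number $M\mapsto \langle r(M),[M]\rangle$. This is well defined on bordism classes. The "relations between Stiefel--Whitney numbers of $n$-manifolds" are exactly $\ker\Phi$. So I must show two things:
\begin{itemize}
\item every element of \eqref{eqn:Dold} lies in $\ker\Phi$;
\item the elements of \eqref{eqn:Dold} span $\ker\Phi$.
\end{itemize}

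\textbf{Step 1: the Dold elements are relations.} Let $M^n$ be closed and $p\in H^k(\BO;\Z/2)$. By the defining property of Wu classes, $\Sq^{n-k}(p(M))=v_{n-k}(M)\,p(M)$ in $H^n(M;\Z/2)$. I then use that the universal Wu classes defined by \eqref{eq:RecursiveFormula} pull back to $v_i(M)$; this holds because the manifold Wu classes satisfy the same recursion \eqref{eq:RecursiveFormula} in terms of the $w_i(M)$. Hence $\Sq^{n-k}(p)-v_{n-k}p$ evaluates to zero on $[M]$.

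\textbf{Step 2: a homotopy-theoretic description of $\ker\Phi$.} I would pass to the Thom spectrum $MO$ via the Pontryagin--Thom isomorphism $\mathfrak{N}_n\cong\pi_n(MO)$ and the Thom isomorphism $H^*(\BO)\cong H^*(MO)$, $p\mapsto U\cdot p$. Here I use the normal bundle, so I must track $w(\nu)=\bar w=w^{-1}$. Thom's theorem says $H^*(MO;\Z/2)$ is a free module over the Steenrod algebra $\mathcal A$, and that $MO$ is a wedge of suspensions of $H\Z/2$. Consequently the Hurewicz map $\pi_n(MO)\to H_n(MO)$ is injective, with image the $\mathcal A$-annihilated classes. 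Dually, a class $U\cdot r\in H^n(MO)$ pairs to zero with all of $\pi_n(MO)$ if and only if it lies in $\bar{\mathcal A}\cdot H^*(MO)$ in degree $n$. This is the span of $\Sq^{i}(U\cdot p)$ for $i>0$, $p\in H^{n-i}(\BO)$, since the $\Sq^i$ generate $\mathcal A$ and the span of $\bar{\mathcal A}\cdot H^*(MO)$ is closed under composites. By the Cartan formula $\Sq(U\cdot p)=U\cdot \bar w\cdot \Sq(p)$, so
\[
\ker\Phi=\operatorname{span}\Bigl\{\textstyle\sum_{j=0}^{i}\bar w_j\,\Sq^{i-j}p \;:\; i>0,\ p\in H^{n-i}(\BO)\Bigr\}.
\]

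\textbf{Step 3: matching the two spanning sets.} It remains to show that the span above equals the span of the Dold elements $\Sq^{i}p - v_{i}p$. Step 1 together with Step 2 already gives "Dold span $\subseteq$ Thom span", so only the reverse inclusion is needed. I would prove it by induction on $i$. The leading term of $\sum_j \bar w_j\Sq^{i-j}p$ is $\Sq^i p$, which modulo Dold elements equals $v_i p$. The lower terms $\bar w_j\Sq^{i-j}p$ with $j>0$ can be rewritten, using the induction hypothesis applied to $\Sq^{i-j}$ acting on the products $\bar w_j\cdot(\text{stuff})$, as multiples of Wu classes. The target identity is then the formal statement $\sum_j \bar w_j\, \chi$-type combinations of $v$'s $=0$. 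This should follow from the Wu formula $w=\Sq(v)$ combined with $\bar w\cdot w=1$, equivalently $\Sq(v)\cdot\bar w=1$.

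I expect this bookkeeping to be the main obstacle: making the triangular change of basis between $\{\Sq(U p)\}$ and the Dold elements precise in all degrees. If the induction becomes unwieldy, I would fall back on a dimension count instead. By Thom, $\dim\ker\Phi=\dim H^n(\BO)-\dim\mathfrak N_n$, and it would then suffice to exhibit that many linearly independent Dold elements, using the $\mathcal A$-freeness to index them.
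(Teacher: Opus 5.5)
The paper gives no proof of this statement; it quotes Dold's Satz~1. Your proposal therefore has to stand on its own.

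Steps 1 and 2 are sound. The universal $v_i$ restrict to the Wu classes because $v=\Sq^{-1}(w)$ holds both universally and on $M$. Your description of $\ker\Phi$ as the span of the degree-$n$ parts $(\bar w\,\Sq(p))_n$, with $p\in H^{n-i}(\BO;\Z/2)$ and $i>0$, is also correct. This includes the passage from normal to tangential classes: the conjugation $w\mapsto\bar w$ is induced by a self-map of $\BO$, so it commutes with Steenrod squares.

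The gap is Step 3. This is exactly the content of Dold's theorem beyond Thom's, and you do not prove it. As sketched, the induction is not yet an argument. The induction hypothesis concerns Thom generators, but the terms $\bar w_j\Sq^{i-j}p$ with $j>0$ are neither Thom generators nor top-degree squares. So neither the hypothesis nor a Dold relation applies to them directly. Pushing $\bar w_j$ inside with the Cartan formula creates new terms $\Sq^a(\bar w_j)\,\Sq^{i-j-a}p$, and you never identify the universal class this process produces or show that it vanishes. The dimension-count fallback is no easier. You give no way to certify that $\dim H^n(\BO)-\dim\mathfrak N_n$ Dold elements are linearly independent, and that is essentially the theorem itself.

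The missing idea is to use multiplicativity of the total square instead of an induction. Work in the completed ring $\prod_k H^k(\BO;\Z/2)$. There the total square $\Sq=\sum_i\Sq^i$ is a ring automorphism: it is multiplicative by the Cartan formula, and invertible because it is the identity plus degree-raising terms. Hence $v=\Sq^{-1}(w)$ is a unit, and
\[
\Sq(v^{-1})=\Sq(v)^{-1}=w^{-1}=\bar w .
\]
This is precisely the identity $\Sq(v)\,\bar w=1$ you point to. Consequently, for every $p$,
\[
\bar w\,\Sq(p)-p=\Sq(v^{-1}p)-v\cdot(v^{-1}p).
\]
Take $p\in H^{n-i}$ with $i>0$. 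The degree-$n$ part of the left side is your Thom generator $\sum_j\bar w_j\Sq^{i-j}p$. The degree-$n$ part of the right side is $\sum_m\bigl(\Sq^{n-m}y_m-v_{n-m}\,y_m\bigr)$, where $y_m=(v^{-1})_{m-n+i}\,p\in H^m(\BO;\Z/2)$; this is a sum of Dold elements. Substituting $p=vq$ gives the reverse inclusion as well, so Step 1 becomes redundant. With this substitution in place of your Step 3, the argument is complete.
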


		We can restate \Cref{thm:Dold} to say that for any manifold $M^n$ with tangent bundle $TM\colon M\to \BO$ the induced map $H^n(\BO;\Z/2)\to H^n(M;\Z/2)$ factors through the quotient group homomorphism $H^n(\BO;\Z/2)\to H^n(\BO;\Z/2)/I_n$, where $I_n$ is a subgroup of $H^n(\BO;\Z/2)$ generated by the set
		\[
		\left\{Sq^{n-k}(p)+v_{n-k}\;p \; |\;  p \in H^k(\BO;\Z/2) \right\}.
		\]
       
		In other words, $H^n(\BO;\Z/2)/I_n$ is the  universal object with respect to the property that for every manifold $M^n$, the map $TM^{*}\colon H^n(\BO;\Z/2)\to H^n(M;\Z/2)$ factors through it.

The top-dimensional Stiefel-Whitney class relates to the parity of the Euler characteristic via the following classical lemma.        
		\begin{lemma}\label{lem:EulerCharTopSWClass}
			Let $TM\colon M\to \BO$ be the stable tangent bundle of some closed manifold $M$. Then $\chi(M)\equiv \langle w_n(M),[M]\rangle\pmod{2}$ where $[M]\in H_n(M;\Z/2)$ is the $\Z/2$-fundamental class of manifold $M$. 
		\end{lemma}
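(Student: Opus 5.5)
We need the classical fact that $\chi(M)\equiv\langle w_n(M),[M]\rangle \pmod 2$ for a closed $n$-manifold $M$. The plan is to pass through the mod $2$ Euler class of the tangent bundle and identify it with the mod $2$ reduction of the Euler characteristic.

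First, for a rank $n$ real vector bundle $E\to M$, the top Stiefel--Whitney class $w_n(E)$ is the mod $2$ reduction of the Euler class. This holds for the $\Z/2$-Euler class in general, so no orientability is needed. Concretely, $w_n(E)$ is the restriction to the zero section of the mod $2$ Thom class $U\in H^n(\mathrm{Th}(E);\Z/2)$. This follows from the Thom-isomorphism definition $w_n(E)=\Phi^{-1}\Sq^n U$ together with $\Sq^n U=U\cup U$, since $U$ has degree $n$. Applying this to $E=TM$ gives $w_n(M)=e_2(TM)$, where $e_2$ is the mod $2$ Euler class. The stable class $w_n$ pulled back along $M\to\BO(n)\to\BO$ agrees with the unstable one.

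Second, I would prove that $\langle e_2(TM),[M]\rangle\equiv\chi(M)\pmod 2$. There are two standard routes.

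\begin{enumerate}
\item The Poincaré--Hopf route. Take a vector field $s$ on $M$ with isolated nondegenerate zeros. The class $e_2(TM)$ is Poincaré dual, with $\Z/2$ coefficients, to the zero set of the generic section $s$. Pairing with $[M]$ therefore counts the zeros mod $2$. By Poincaré--Hopf, which for non-orientable $M$ still holds for the signed count of local indices, that count is congruent to $\chi(M)$ mod $2$, because each local index is $\pm1$.
\item The diagonal route, following Milnor--Stasheff. Let $u\in H^n(M\times M;\Z/2)$ be the Poincaré dual of the diagonal. Its restriction $\Delta^*u$ equals $w_n(M)$, since a tubular neighbourhood of $\Delta$ is $TM$ and $u$ restricts to the Thom class. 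On the other hand, expanding $u=\sum_i a_i\times b_i^\#$ in a basis $\{a_i\}$ with dual basis $\{b_i^\#\}$ gives $\langle \Delta^*u,[M]\rangle=\sum_i \langle a_i\cup b_i^\#,[M]\rangle=\sum_i 1=\sum_k\dim_{\Z/2}H^k(M;\Z/2)$. This sum is congruent to $\chi(M)$ mod $2$, and computing $\chi$ with field coefficients $\Z/2$ is legitimate.
\end{enumerate}

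I would follow the diagonal route because it works verbatim with $\Z/2$ coefficients and avoids orientation issues. The main obstacle is the identification $\Delta^*u=w_n(M)$. This requires checking that the restriction of $u$ to a tubular neighbourhood of $\Delta\cong M$, which is identified with $TM$, is the Thom class, and then applying $\Sq^n U=U^2$. All remaining steps are bookkeeping with the Künneth formula and Poincaré duality over $\Z/2$. Since this is a classical result, we may instead simply cite \cite{MilnorStasheff1974} (Corollary 11.12 there, in its mod $2$ form).
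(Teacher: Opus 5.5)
The paper gives no proof of this lemma; it invokes it as a classical fact, which is exactly your fallback of citing Milnor--Stasheff. Your sketched argument is the standard correct proof of that fact: first identify $w_n(M)$ with the mod~$2$ Euler class using $\Sq^nU=U\cup U$, then evaluate it on $[M]$ either by Poincar\'e--Hopf or by the diagonal-class computation $\langle\Delta^*u,[M]\rangle=\sum_k\dim_{\Z/2}H^k(M;\Z/2)\equiv\chi(M)$.
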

In particular, if $w_n(M)=0$ then $\chi(M)$ is even.
		\begin{proposition}\label{prop:ConditionEvenEuler}
			Let $\xi\colon B\to \BO$ be any tangential structure. If $w_n\in\xi^*(I_n)\subset H^n(B;\Z/2)$ then all closed $\xi$-manifolds $M^n$ have even Euler characteristic.
		\end{proposition}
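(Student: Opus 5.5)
The plan is to combine Dold's theorem, as restated after \Cref{thm:Dold}, with \Cref{lem:EulerCharTopSWClass}. Let $M^n$ be a closed $\xi$-manifold. Its $\xi$-structure is a lift $\tilde\tau\colon M\to B$ of the tangent classifying map, so $TM=\xi\circ\tilde\tau$. On cohomology this gives $TM^*=\tilde\tau^*\circ\xi^*$. In particular $w_n(M)=TM^*(w_n)=\tilde\tau^*(\xi^*w_n)$.

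By hypothesis $\xi^*w_n$ lies in $\xi^*(I_n)$, so we can write $\xi^*w_n=\xi^*(r)$ for some $r\in I_n$. It follows that
\[
w_n(M)=\tilde\tau^*\xi^*(r)=TM^*(r).
\]
Note that we do not need $w_n\in I_n$ itself, only that the two classes agree after pulling back to $B$. This is exactly why factoring $TM^*$ through $B$ is the right move.

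It remains to see that $TM^*(r)=0$ for $r\in I_n$. By linearity it suffices to check this on the generators $\Sq^{n-k}(p)+v_{n-k}p$ with $p\in H^k(\BO;\Z/2)$. Naturality of Steenrod squares gives $TM^*\Sq^{n-k}(p)=\Sq^{n-k}(p(M))$, where $p(M)\in H^k(M;\Z/2)$. The universal Wu classes, defined by the recursive formula \eqref{eq:RecursiveFormula}, pull back to the Wu classes of $M$, since that same recursion holds in $M$. So $TM^*(v_{n-k})=v_{n-k}(M)$. The defining property of Wu classes then gives $\Sq^{n-k}(p(M))=v_{n-k}(M)\,p(M)$, and therefore each generator pulls back to zero. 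Alternatively, one could cite the restatement of Dold's theorem directly: $TM^*$ factors through $H^n(\BO;\Z/2)/I_n$.

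Hence $w_n(M)=0$, and \Cref{lem:EulerCharTopSWClass} gives $\chi(M)\equiv\langle w_n(M),[M]\rangle\equiv 0\pmod 2$. There is no real obstacle here. The only point needing care is the compatibility of the universal Wu classes with those of $M$, which is why I would spell out the recursion argument above rather than rely on it implicitly.
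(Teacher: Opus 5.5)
Your proof is correct and follows the same route as the paper: factor $TM^*$ through $\xi^*$, use that every element of $I_n$ pulls back to zero on an $n$-manifold, and conclude with \Cref{lem:EulerCharTopSWClass}. The only difference is that you check directly that the generators of $I_n$ vanish on $M$, using naturality of $\Sq$ and the Wu recursion, which is the easy half of Dold's theorem and all that is needed here; the paper simply cites \Cref{thm:Dold}.
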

		\begin{proof}
			In this case $w_n(M)$ is trivial for any $n$-dimensional $\xi$-manifold $M$ by  \Cref{thm:Dold}, so by \Cref{lem:EulerCharTopSWClass} $M$ has even Euler characteristic. 
		\end{proof}
        
		\subsection{Tangential structures as homotopy fibers of cohomology classes}\label{subsec:TangentialStr}
        
		Let $w\in H^i(\BO;\Z/2)$ be a homogeneous cohomology class. We can define $\xi_w\colon B_{w}\to \BO$ to be the homotopy fibre of the map $\BO\xrightarrow{w}K(\Z/2;i)$, so that a manifold $M$ has a tangential $\xi_w$-structure if and only if $w(M)=0$.
		
		\begin{example} 
			We have 
			\begin{enumerate}
            \item $\xi_{w_1} \colon B_{w_1}\to \BO$ is the $BSO$ structure, i.e.\ an orientation
				\item $\xi_{w_2} \colon B_{w_2}\to \BO$ is the $B\Pin^{+}$ structure, $\xi_{\Pin^+}:=\xi_{w_2}$
				\item $\xi_{w_2 + w_1^2} \colon B_{w_2+w_1^{2}} \to \BO$ is the $B\Pin^{-}$ structure, $\xi_{\Pin^-}:=\xi_{w_2+w_1^2}$.
			\end{enumerate}
		\end{example}

\begin{notn}
    Let $w\in H^i(\BO;\Z/2)$ be a homogeneous cohomology class.
    We denote the ideal generated by actions of the Steenrod algebra $\mathcal{A}$ on $w$ by $(\mathcal{A}w)$. By $(\mathcal{A}w)_n$ we denote its elements in degree $n$, i.e.\ $(\mathcal{A}w)_n = (\mathcal{A}w) \cap H^n(\BO;\Z/2)$.
\end{notn}
Since $\xi_n^*(w)=0$ by definition of $\xi_w$, and the Steenrod squares are natural, it follows that $\xi_w^*(\mathcal{A}w)=0$, and hence we have: 
		\begin{lemma}\label{lem:betterSets}
			Let $w\in H^i(\BO;\Z/2)$ be a homogeneous cohomology class and let $\xi_{w}\colon B_w\to \BO$ be as above. Then  $ \xi_w^*(\langle I_n\cup (\mathcal{A}w)_n \rangle)=\xi_w^*(I_n)$, where $\langle I_n\cup (\mathcal{A}w)_n \rangle \subseteq H^n(BO; \Z/2)$ is the additive subgroup generated by $I_n$ and $(\mathcal{A}w)_n$.
		\end{lemma}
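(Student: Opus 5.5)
The plan is to prove the two inclusions separately; the inclusion $\supseteq$ is immediate. Since $I_n\subseteq\langle I_n\cup(\mathcal{A}w)_n\rangle$, applying the group homomorphism $\xi_w^*$ gives $\xi_w^*(I_n)\subseteq\xi_w^*(\langle I_n\cup(\mathcal{A}w)_n\rangle)$.

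For the inclusion $\subseteq$, I will use that $\xi_w^*\colon H^n(\BO;\Z/2)\to H^n(B_w;\Z/2)$ is additive. The image of the subgroup generated by $I_n\cup(\mathcal{A}w)_n$ is then the subgroup generated by $\xi_w^*(I_n)$ and $\xi_w^*((\mathcal{A}w)_n)$. It therefore suffices to show that $\xi_w^*((\mathcal{A}w)_n)=0$.

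To prove this vanishing, I first note that by construction of $B_w$ as the homotopy fibre of $w\colon\BO\to K(\Z/2;i)$, the composite $B_w\to\BO\to K(\Z/2;i)$ is nullhomotopic, so $\xi_w^*(w)=0$. A general element of the ideal $(\mathcal{A}w)$ in degree $n$ is a finite sum $\sum_j a_j\,\theta_j(w)$ with $\theta_j\in\mathcal{A}$ and $a_j\in H^*(\BO;\Z/2)$ homogeneous of complementary degree. Because Steenrod operations commute with pullbacks, $\xi_w^*(\theta_j(w))=\theta_j(\xi_w^*w)=0$. Because $\xi_w^*$ is a ring homomorphism, each product $a_j\,\theta_j(w)$ also pulls back to zero. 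Hence $\xi_w^*((\mathcal{A}w)_n)=0$.

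There is no genuine obstacle in this argument. The only point requiring care is the interpretation of ``the ideal generated by actions of the Steenrod algebra on $w$'': I take it to consist of $H^*(\BO;\Z/2)$-linear combinations of the classes $\theta(w)$. One might worry about whether the ideal should instead be closed under further Steenrod operations. This does not matter: by the Cartan formula, Steenrod operations applied to products $a\,\theta(w)$ are again sums of terms of the form $a'\,\theta'(w)$, and all of these pull back to zero by the same reasoning.
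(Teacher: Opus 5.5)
Your proof is correct and follows the paper's own argument. You use that $\xi_w^*(w)=0$ because $B_w$ is the homotopy fibre, then naturality of Steenrod squares together with multiplicativity of $\xi_w^*$ to get $\xi_w^*((\mathcal{A}w)_n)=0$, and then additivity of $\xi_w^*$; you simply spell out the two inclusions and the ring-homomorphism step that the paper leaves implicit.
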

		
Hence, if we want to check whether a characteristic class $x\in H^n(\BO;\Z/2)$ vanishes on all $\xi_w$-manifolds, we can check whether $\xi_w^*(x) \in \xi_w^*(I_n) = \xi_w^*(\langle I_n\cup (\mathcal{A}w)_n\rangle)$, which follows if $x \in \langle I_n\cup (\mathcal{A}w)_n\rangle$, or equivalently $x \in  I_n \pmod{ (\mathcal{A}w)_n)}$.

		\section{Examples of $\Pin^{\pm}$-manifolds with odd Euler characteristic}\label{sec:ProjSpaces}
		
		In this section we verify the examples with odd Euler characteristics in Table \ref{table:StrucutresOfProjSpaces}. The total Stiefel--Whitney class of $\RP^n$ is given as  $w(\mathbb{RP}^{n}) = (1+t)^{n+1}$, for $t$ the degree 1 generator of $H^*(\mathbb{RP}^{n}; \Z/2) \cong \Z/2 [t]/t^{n+1}$. We obtain the table below.
\begin{table}[H]
			\begin{tabular}{lcccccccc}
				\hline
				& odd $\chi$ & $w_1$ & $w_2$ & $v_2$  & $\Pin^-$ ($v_2 = 0$) & $\Pin^+$ ($w_2 = 0$)  
				\\ \hline
				$\mathbb{RP}^{4k}$ & Y & $t$ & 0 & $t^2$      & N &  Y  \\
				$\mathbb{RP}^{4k+1}$ & N & 0 & $t^2$ & $t^2$      & N &  N 
				\\
				$\mathbb{RP}^{4k+2}$ & Y & $t$ & $t^2$ & 0      & Y &  N
				\\
				$\mathbb{RP}^{4k+3}$ & N & 0 & 0 & 0 &  Y &Y
				\\
				\hline
			\end{tabular}
			\caption{Real projective spaces admitting tangential $Pin^{\pm}$-structures. The label Y means yes, and N means no.}
			\label{tab:pinRPn}
		\end{table}

        Similarly there are quaternionic projective spaces $\HP^{n}$ of dimension $4n$. They can be endowed with a CW structure with one cell in each dimension divisible by $4$ up to and including $4n$. As such $\chi(\HP^{n})=n+1$ and they admit both $\Pin^+$ and $\Pin^-$-structures. Thus for even $n$ the manifold $\HP^{n}$ is a $\Pin^{\pm}$ with odd Euler characteristic.

		\section{Even Euler characteristic}

In this section we establish the results restricting the parity of the Euler characteristic of $\Pin^{\pm}$ manifolds.

		\subsection{\texorpdfstring{$\Pin^+$}{Pin+} manifolds in dimension \texorpdfstring{$8k+2$}{8k+2}}
		
		In this subsection we show
		
		\begin{theorem}\label{thm:8kp2PinPlus}
			Every $\Pin^+$-manifold in dimension $8k+2$ has even Euler characteristic.
		\end{theorem}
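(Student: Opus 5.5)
The plan is to apply \Cref{prop:ConditionEvenEuler} together with \Cref{lem:betterSets}. With $n=8k+2=2m$ and $m=4k+1$, it suffices to show
\[
w_n \in \langle I_n \cup (\mathcal{A}w_2)_n\rangle \subseteq H^n(\BO;\Z/2).
\]
Equivalently, $w_n$ should be a sum of Dold relations $\Sq^{n-j}(p)+v_{n-j}p$ modulo the ideal generated by Steenrod operations on $w_2$. The Klein bottle, whose $v_1\neq 0$, shows that we cannot simply prove $v_m=0$. So the argument has to exploit a genuinely non-trivial combination of Dold relations.

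\textbf{Step 1: reduce modulo $w_2$.} Work in the quotient ring
\[
R=H^*(\BO;\Z/2)/(\mathcal{A}w_2).
\]
By \eqref{eqn:Sqw}, $\Sq^1 w_2=w_1w_2+w_3$, so $w_3\equiv 0$. Iterating $\Sq^{2^j}$ on $w_2$ and using \eqref{eqn:Sqw} should kill further classes; I expect the survivors to be polynomials in $w_1$ and the $w_{2^j}$ with $j\ge 2$, together with some odd classes of the form $w_{2^j+1}$ modulo relations. Next, compute the universal Wu classes $v_i$ in $R$ using \eqref{eq:RecursiveFormula}. A natural guess is that modulo $(\mathcal{A}w_2)$ we have $v_m\equiv w_1^{m}$ plus terms involving higher $w_{2^j}$, in analogy with the Klein bottle.

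\textbf{Step 2: choose the Dold relations.} By \Cref{lem:EquatioWuWn}, the Dold relation with $p=v_m$ and $\Sq^m$ already gives $w_n\equiv v_m^2$, since $\Sq^m v_m=v_m^2$ and $v_m\cdot v_m=v_m^2$. So the real task is to show $v_m^2\in\langle I_n\cup(\mathcal{A}w_2)_n\rangle$. I would first try the relations $\Sq^{1}(p)+v_1p=\Sq^1p+w_1p$ for $p\in H^{n-1}$. These say that $\Sq^1+w_1$ acts trivially in top degree, i.e.\ the "twisted integral" Bockstein relation. For example, taking $p=w_1^{n-1}$ gives $\Sq^1 w_1^{n-1}+w_1^n=0$, which holds automatically since $n-1$ is odd. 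Taking $p=w_1^{m-1}\cdot x$ for suitable $x$ should then reduce $v_m^2$ to a combination of monomials that the relations with $\Sq^2$ and $v_2=w_1^2$ (mod $w_2$) eliminate. Concretely, I would expand $v_m^2$ in $R$, collect monomials $w_1^a\prod w_{2^j}^{b_j}$, and for each one find $p$ with $\Sq^{n-\deg p}p+v_{n-\deg p}\,p$ equal to it modulo lower-order terms in a suitable filtration (for instance the power of $w_1$), eliminating by induction.

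\textbf{Main obstacle.} The hardest step is finding the explicit $p$'s uniformly in $k$. The role of $8k+2$ presumably enters through binomial coefficients in $\Sq^i w_j$ and in $v_m$ with $m\equiv 1 \pmod 4$. I would first compute the cases $n=2$ and $n=10$ by hand or by computer, read off which $p$ work (likely $p=w_1^{n-2j}v_j$ type products or $p=v_{m-1}w_1^{\ast}$), and then verify the pattern for general $k$ using Lucas' theorem on the relevant binomial coefficients mod $2$.
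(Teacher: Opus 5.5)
The framework is the paper's: by \Cref{prop:ConditionEvenEuler} and \Cref{lem:betterSets} it suffices to show $w_n\in\langle I_n\cup(\mathcal{A}w_2)_n\rangle$. But your proposal stops where the proof has to begin. You never exhibit a combination of Dold relations equal to $w_n$ modulo $(\mathcal{A}w_2)$, and what you call the main obstacle is the whole content of the theorem.

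Two smaller points. First, the Dold relation with $p=v_m$ reads $\Sq^m v_m+v_m\cdot v_m=0$ and gives nothing. The statement $w_n+v_m^2\in I_n$ is true, but the reason is that this class vanishes on every closed $n$-manifold (\Cref{lem:EquatioWuWn}), together with the completeness statement in \Cref{thm:Dold}. Second, trading the single monomial $w_n$ for $v_m^2$ only complicates matters. Your guess about the shape of $v_m$ is also off: modulo $(\mathcal{A}w_2)$ one has $v_5\equiv w_1w_4$, with no $w_1^5$ term.

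More seriously, the relations you single out, $\Sq^1p+w_1p$ and $\Sq^2q+v_2q$ with $v_2\equiv w_1^2$, cannot do the job on their own. Here is a counterexample.

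Let $N=\RP^8\times S^2$, so $H^*(N;\Z/2)=\Z/2[t,s]/(t^9,s^2)$. Let $\lambda$ be the tautological line bundle, let $\eta$ be the $2$-plane bundle pulled back from $S^2$ with $w_2(\eta)=s$, and put $E=7\lambda\oplus\eta\oplus(\lambda\otimes\eta)$. Then $w(E)=(1+t)^7(1+s)(1+t^2+s)=1+t+t^8+(t^2+t^3+\dots+t^8)s$. Hence $w_1(E)=t=v_1(N)$, $w_2(E)=0$ and $v_2(E)=t^2=v_2(N)$, while $w_{10}(E)=t^8s\neq 0$.

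The functional $x\mapsto\langle x(E),[N]\rangle$ on $H^{10}(\BO;\Z/2)$ therefore kills every $\Sq^1$- and $\Sq^2$-relation, since $\Sq^i$ into the top degree of $N$ is multiplication by $v_i(N)=v_i(E)$ for $i\le 2$. It also kills all of $(\mathcal{A}w_2)_{10}$, since $w_2(E)=0$. But it does not kill $w_{10}$, so no argument built from those relations plus $w_2\equiv 0$ can reach $w_{10}$.

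What is missing is the use of $\Sq^4$. The paper takes the Dold relations $\Sq^4(p)+v_4p$ with $p=w_1^{2i}w_{8k-2-2i}$ for $0\le i\le 4k-1$, and uses $v_4\equiv w_4+w_1^4$ modulo $(\mathcal{A}w_2)$. It expands $\Sq^4(w_1^{2i}w_{2m})$ using:
\begin{itemize}
\item the Cartan formula;
\item the values of $\Sq^j(w_1^{2i})$ for $j\le 4$;
\item \eqref{eqn:Sqw} and Lucas' theorem, under the constraint $2i+2m\equiv 6\pmod 8$.
\end{itemize}
This turns each relation into an element $\mathfrak{r}_{2i}$ with at most two terms. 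These telescope: $\sum_{i=0}^{4k-1}\mathfrak{r}_{2i}=w_{8k+2}+w_1^{8k}w_2\equiv w_{8k+2}$.

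In the example above, $E$ fails to annihilate $\Sq^4(w_1^6)+v_4w_1^6\equiv w_1^6w_4$: it evaluates to $t^8s$. That is exactly the relation $\mathfrak{r}_6$ of the paper's argument for $k=1$.
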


		\Cref{thm:8kp2PinPlus} will follow from the following proposition.
		\begin{proposition}\label{prop:8kp2ToPSWClass}
			Let $n=8k+2$. The universal Stiefel-Whitney class $w_n$ is an element of $\langle I_n\cup (\mathcal{A}w_2)_n\rangle$.
		\end{proposition}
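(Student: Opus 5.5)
We want to show $w_n \equiv 0$ modulo the additive span of $I_n$ and $(\mathcal{A}w_2)_n$, where $n=8k+2$ and $m=4k+1$. By \Cref{lem:EquatioWuWn} the relevant manifold identity is $w_n=v_m^2$. For $\Pin^+$-manifolds $v_m$ itself need not vanish (the Klein bottle shows this), so we cannot simply kill the top Wu class.

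The plan is to first make the relation $w_n = v_m^2$ explicit in $H^n(\BO;\Z/2)$. Take $p=v_m$ in \Cref{thm:Dold}. This gives $\Sq^m v_m + v_m^2 \in I_n$, and since $\Sq^m v_m = v_m^2$ universally, that relation is vacuous. Instead we use $w=\Sq(v)$ universally, together with $\Sq^{j}(x_{n-j})+v_j x_{n-j}\in I_n$ for $j<m$. From these we derive $w_n \equiv v_m^2 \pmod{I_n}$ in $H^n(\BO)$. Concretely, $w_n = \sum_j \Sq^{n-j} v_j$ universally, and each term with $j<n-j$ is congruent mod $I_n$ to $v_{n-j}v_j$. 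The key point is that, modulo $I_n$, $v_i$ for $i>m$ behaves like zero. The reduction is therefore to show
\[ v_m^2 \in \langle I_n \cup (\mathcal{A}w_2)_n\rangle. \]

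Next we compute $v_m$ modulo the ideal $(\mathcal{A}w_2)$. Let $J$ be the ideal generated by $w_2$ and all its Steenrod iterates, so $w_2, w_3 = \Sq^1 w_2 + w_1w_2, w_5 \equiv \Sq^2 w_3,\dots$. Using \eqref{eqn:Sqw} we get $w_{2^r+1}\in J$ (up to products), and in fact $J$ contains all $w_i$ with $i\not\equiv 0,1$ in a suitable sense. In the quotient $H^*(\BO)/J$, the recursive formula \eqref{eq:RecursiveFormula} then gives a closed form for $v_m$. By analogy with the Klein bottle, we expect $v_m \equiv w_1 \cdot y$ for some class $y$ of degree $4k$; for $\RP$-type examples this is the pattern $v_{4k+1}\sim t^{4k+1}$.

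The final step is to show $(w_1 y)^2 = w_1^2y^2 \in I_n$ mod $J$. We would apply Dold relations with judiciously chosen $p$: take $p=w_1 y^2$ with $\Sq^1$, or $p = y^2$ of degree $8k$ with $\Sq^2 p + v_2 p$. Here $v_2=w_2+w_1^2\equiv w_1^2$ mod $J$, and $\Sq^2(y^2)=(\Sq^1y)^2$. This gives $w_1^2y^2 \equiv (\Sq^1 y)^2$ mod $I_n+J$, and we then kill $(\Sq^1 y)^2$ with a further relation, e.g.\ $p=(\Sq^1y)\cdot(\text{stuff})$ in degree $n-1$ with $\Sq^1$ and $v_1=w_1$.

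The main obstacle is this last bookkeeping. The explicit form of $v_m$ mod $J$ may be messy, and the needed combination of Dold relations is not canonical. The paper's remark that a computer search was used suggests the relevant combination is found by linear algebra over $\Z/2$ in degree $n$. My first attempt would be to work in the quotient ring by $J$, where I expect the generators to reduce essentially to $w_1$ and $w_{2^r}$-type classes. There I would express $v_m^2$ and the Dold elements $\Sq^{j}p+v_jp$ for small $j$ ($j=1,2,4$). I would then look for a uniform-in-$k$ pattern, likely by induction on $k$ using $\Sq^{8}$-periodicity of the Wu class structure.
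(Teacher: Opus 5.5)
Your first reduction is correct, after fixing an index slip. The terms $\Sq^{n-j}v_j$ with $j<n-j$ vanish identically. The terms with $j>n-j$ are congruent to $v_{n-j}v_j$, and these lie in $I_n$ because $v_jp\in I_n$ whenever $\deg p<j$.

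The trouble is that the reduction makes no progress. Every congruence in your chain
\[
w_n\equiv v_m^2\equiv w_1^2y^2\equiv(\Sq^1y)^2
\]
holds modulo $\langle I_n\cup(\mathcal{A}w_2)_n\rangle$. So the statement that $(\Sq^1y)^2$ lies in this subgroup is equivalent to the proposition itself, and the entire content of the proof sits in the step you defer as bookkeeping.

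The one concrete relation you suggest for that step does not help. Taking $p=y\,\Sq^1y$ with $\Sq^1$ gives $(\Sq^1y)^2+w_1y\,\Sq^1y\in I_n$. Modulo $J$ this coincides with the $\Sq^m$-relation for $p=\Sq^1y$, namely $(\Sq^1y)^2\equiv v_m\Sq^1y$. It only trades $(\Sq^1y)^2$ for $w_1y\,\Sq^1y$, whose membership is again equivalent to the claim. Since $y$ is not known explicitly, there is nothing concrete to feed into further relations. You also give no reason why relations with $j=1,2$, which only control $\Sq^1,\Sq^2$ into the top degree, should ever suffice; you mention $j=4$ only in passing.

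The missing idea is to bypass $v_m$ entirely and let the $j=4$ relations do the work. Modulo $(\mathcal{A}w_2)$ one has $v_4\equiv w_4+w_1^4$. The paper applies Dold's relation $\Sq^4(p)+v_4p\in I_n$ to $p=w_1^{2i}w_{8k-2i-2}$ for $0\le i\le 4k-1$. Expanding with the Cartan formula, \eqref{eqn:Sqw}, $w_2\equiv 0$, and the values of $\Sq^2,\Sq^4$ on $w_1^{2i}$ (which depend only on $2i \bmod 8$), each relation reduces modulo $(\mathcal{A}w_2)$ to an explicit element $\mathfrak r_{2i}$ with at most two monomials, depending only on $i\bmod 4$. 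The sum $\sum_{i=0}^{4k-1}\mathfrak r_{2i}$ then telescopes to $w_{8k+2}+w_1^{8k}w_2\equiv w_{8k+2}$.

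Some smaller inaccuracies:
\begin{itemize}
\item Your picture of $J$ is wrong. The products matter: $\Sq^2w_3=w_5+w_1w_4+w_2w_3$, so $w_5\equiv w_1w_4\not\equiv 0$ modulo $J$.
\item $J$ contains neither $w_6$ nor $w_7$. Modulo $J$ only $w_2,w_3,w_5,w_9,\dots$ (the $w_{2^r+1}$) are eliminated, so the quotient is not essentially generated by $w_1$ and the $w_{2^r}$.
\item $v_m\equiv w_1y$ is asserted only by analogy. It is in fact true: modulo $(w_1)+J$ you are computing in $H^*(B\mathrm{Spin};\Z/2)$, where the universal $v_i$ vanish for $i\not\equiv 0 \bmod 4$. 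But it needs that argument.
\end{itemize}
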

		\begin{proof}[Proof of \Cref{thm:8kp2PinPlus} using \Cref{prop:8kp2ToPSWClass}]
			Since $w_n\in\langle I_n\cup (\mathcal{A}w_2)_n\rangle$ we can use \Cref{lem:betterSets} to get that $\xi_{\Pin^+}^*(w_n)\in \xi_{\Pin^+}^*(I_n)$. That implies that all $\xi_{\Pin^+}$-manifolds $M$ have $w_n(M)=0$  which suffices to finish the proof using \Cref{prop:ConditionEvenEuler}.
		\end{proof}

	We first prove a couple of technical lemmata.
		
		\begin{lemma}\label{lem:SqSWClasses}
			The following formulas hold in $H^*(\BO;\Z/2)$.
			\begin{enumerate}
				\item $\Sq^1(w_1^{2i})=0$
				\item $\Sq^2(w_1^{2i})=
				\begin{cases}
					w_1^{2i+2} & 2i\equiv 2,6\pmod{8} \\
					0 & 2i\equiv 0,4\pmod{8}
				\end{cases}$
				\item $\Sq^3(w_1^{2i})=0$
				\item $\Sq^4(w_1^{2i})=\begin{cases}
					w_1^{2i+4} & 2i\equiv 4, 
					6\pmod{8},\\
					0 & 2i \equiv 0,2 \pmod{8}.
				\end{cases}$
			\end{enumerate}
		\end{lemma}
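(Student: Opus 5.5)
The approach is to use the Cartan formula, which reduces everything to binomial coefficients mod $2$. Since $w_1$ has degree $1$, we have $\Sq(w_1) = w_1 + w_1^2$. Multiplicativity of the total square then gives
\[
\Sq(w_1^{m}) = w_1^m(1+w_1)^m, \qquad\text{so}\qquad \Sq^j(w_1^m) = \binom{m}{j} w_1^{m+j}.
\]
We apply this with $m=2i$. Each item then amounts to evaluating $\binom{2i}{j} \bmod 2$ for $j=1,2,3,4$, and Lucas' theorem does this.

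\textbf{Odd squares (items (1) and (3)).} For $j=1$ and $j=3$, the exponent $j$ has a $1$ in binary digit $0$, while $2i$ has a $0$ there. Lucas' theorem therefore gives $\binom{2i}{j}\equiv 0$. Alternatively, $(1+w_1)^{2i} = (1+w_1^2)^i$ has only even-degree terms, so every odd square vanishes.

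\textbf{Even squares (items (2) and (4)).} Using $(1+w_1)^{2i}=(1+w_1^2)^i$, we get
\[
\Sq^2(w_1^{2i}) = \binom{i}{1} w_1^{2i+2}, \qquad \Sq^4(w_1^{2i}) = \binom{i}{2} w_1^{2i+4}.
\]
Now $\binom{i}{1}=i$ is odd exactly when $i$ is odd, i.e.\ when $2i\equiv 2,6 \pmod 8$. By Lucas' theorem, $\binom{i}{2}$ is odd exactly when bit $1$ of $i$ is set, i.e.\ when $i\equiv 2,3\pmod 4$, equivalently $2i\equiv 4,6\pmod 8$. These are precisely the stated cases.

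There is no real obstacle here. The only point requiring care is justifying $\Sq(w_1)=w_1+w_1^2$. This holds either because $w_1$ is a degree-one class (so $\Sq^1 w_1 = w_1^2$ and higher squares vanish), or by setting $i=1$, $j=1$ in \eqref{eqn:Sqw}. After that, the argument is bookkeeping of binary digits of $i$ modulo $4$.
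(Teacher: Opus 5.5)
Your argument is correct, but it is organized differently from the paper's. Both proofs rest on the Cartan formula; the paper applies it piecemeal and inducts on $i$, while you apply it once to get a closed form.

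The paper writes $w_1^{2i}=w_1^i\cdot w_1^i$, so the odd squares cancel in symmetric pairs. It then writes $w_1^{2i}=w_1^2\cdot w_1^{2i-2}$ to obtain the recursions $\Sq^2(w_1^{2i})=w_1^2\Sq^2(w_1^{2i-2})+w_1^{2i+2}$ and $\Sq^4(w_1^{2i})=w_1^4\Sq^2(w_1^{2i-2})+w_1^2\Sq^4(w_1^{2i-2})$, which it unwinds by induction on $i$.

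You use multiplicativity of the total square to get $\Sq^j(w_1^m)=\binom{m}{j}w_1^{m+j}$. The identity $(1+w_1)^{2i}=(1+w_1^2)^i$ then gives $\Sq^{2l}(w_1^{2i})=\binom{i}{l}w_1^{2i+2l}$, with all odd squares vanishing.

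Your version handles all four items uniformly and covers every $\Sq^j$ at once. It also makes the mod-$8$ pattern visible as a statement about the two lowest binary digits of $i$. The paper's recursions sum to exactly your coefficients: $\sum_{j<i}1=i$ for $\Sq^2$ and $\sum_{j<i}j=\binom{i}{2}$ for $\Sq^4$.

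The paper's version needs only the Cartan formula for individual squares and no Lucas-type bookkeeping. The paper invokes Lucas' theorem anyway for its binomial table in the next lemma, so your route fits naturally with the rest of the argument.
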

		\begin{proof}
			The first formula follows from \cref{eqn:Sqw} as
			\[
			\Sq^1(w_1^{2i}) = w_1^i \Sq^1(w_1^i) + \Sq^1(w_1^i) w_1^i = 0.
			\]

			The second formula follows by the first formula using induction on $i$ for
			\begin{align*}
				\Sq^2(w_1^{2i}) &= w_1^2 \Sq^2(w_1^{2i-2}) + \Sq^1(w_1^2) \Sq^1( w_1^{2i-2}) +  \Sq^2(w_1^2) w_1^{2i-2} 
				\\
				&= w_1^2 \Sq^2(w_1^{2i-2}) + w_1^{2i+2}
			\end{align*}
			since $\Sq^2(w_1^2) = w_1^4$.
			
			The third formula follows since
			\[
			\Sq^3(w_1^{2i}) = 2 w_1^i \Sq^3(w_1^{i}) + 2 \Sq^1(w_1^i) \Sq^2(w_1^{i}) = 0.
			\]

			For the final formula, we use $\Sq^4(w_1^2) =  0$, the second formula, and induction after computing
			\begin{align*}
				\Sq^4(w_1^{2i}) &= \Sq^4(w_1^2) w_1^{2i-2} + \Sq^2(w_1^2) \Sq^2(w_1^{2i-2}) + w_1^2 \Sq^4(w_1^{2i-2})
				\\
				&= w_1^4 \Sq^2(w_1^{2i-2}) + w_1^2 \Sq^4(w_1^{2i-2}).
			\end{align*}
			We obtain that $\Sq^4(w_1^{2i}) = 0$ if $2i \equiv 0,2 \mod 8$ and $w_1^{2i+4}$ when $2i \equiv 4,6 \mod 8$.
		\end{proof}
		
		\begin{lemma}
			\label{it:4sqprop} Let $i,m$ be nonnegative integers such that $2m+2i\equiv 6\pmod{8}$ then modulo $(\mathcal{A}w_2)$ we have
			\[\Sq^4(w_1^{2i}w_{2m})=w_1^{2i}w_4w_{2m}+
			\begin{cases}
				w_1^{2i}w_{2m+4} & 2i\equiv 0\text{ (mod 8)}\\
				w_1^{2i+2}w_{2m+2} & 2i\equiv 2\text{ (mod 8)}\\
				w_1^{2i+4}w_{2m} & 2i\equiv 4\text{ (mod 8)}\\
				w_1^{2i}w_{2m+4}+w_1^{2i+2}w_{2m+2}+ w_1^{2i+4}w_{2m} & 2i\equiv 6\text{ (mod 8).}
			\end{cases}\]
		\end{lemma}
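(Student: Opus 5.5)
The plan is to expand $\Sq^4(w_1^{2i}w_{2m})$ with the Cartan formula, reduce each factor using \Cref{lem:SqSWClasses} and formula \eqref{eqn:Sqw}, drop whatever lies in the ideal $(\mathcal{A}w_2)$, and then read off the four cases from the parities of a few binomial coefficients.

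\textbf{Step 1: Cartan expansion.} By the Cartan formula,
\[
\Sq^4(w_1^{2i}w_{2m})=\sum_{j=0}^4 \Sq^j(w_1^{2i})\,\Sq^{4-j}(w_{2m}).
\]
\Cref{lem:SqSWClasses} gives $\Sq^1(w_1^{2i})=\Sq^3(w_1^{2i})=0$. So only three terms survive:
\[
w_1^{2i}\Sq^4(w_{2m})+\Sq^2(w_1^{2i})\Sq^2(w_{2m})+\Sq^4(w_1^{2i})\,w_{2m}.
\]
The factors $\Sq^2(w_1^{2i})$ and $\Sq^4(w_1^{2i})$ are read off directly from \Cref{lem:SqSWClasses}.

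\textbf{Step 2: reducing modulo $(\mathcal{A}w_2)$.} The ideal $(\mathcal{A}w_2)$ contains $w_2$. It also contains $\Sq^1 w_2=w_1w_2+w_3$, and $w_1w_2$ already lies in the ideal, so $w_3$ does too. Hence every monomial containing $w_2$ or $w_3$ vanishes modulo the ideal.

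Apply \eqref{eqn:Sqw} with $j=2m$:
\begin{itemize}
\item For $\Sq^2(w_{2m})$, the $k=0$ term $w_2w_{2m}$ dies. The $k=1$ term has coefficient $\binom{2m-2}{1}$, which is even. The $k=2$ term is $\binom{2m-1}{2}w_{2m+2}$.
\item For $\Sq^4(w_{2m})$, the $k=0$ term is $w_4w_{2m}$. The $k=1,2$ terms contain $w_3$ and $w_2$ and die. The $k=3$ coefficient $\binom{2m-2}{3}$ is even by Lucas, since $2m-2$ is even and $3$ is odd. The $k=4$ term is $\binom{2m-1}{4}w_{2m+4}$.
\end{itemize}
By Lucas' theorem, $\binom{2m-1}{2}$ is odd iff $2m\equiv 0\pmod 4$. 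Likewise $\binom{2m-1}{4}$ is odd iff $2m-1\equiv 4,5,6,7\pmod 8$, i.e.\ iff $2m\equiv 6,0\pmod 8$.

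\textbf{Step 3: case check.} The hypothesis $2m\equiv 6-2i\pmod 8$ fixes $2m$ in each case.
\begin{itemize}
\item $2i\equiv 0$: then $2m\equiv 6$. This gives $w_1^{2i}w_4w_{2m}+w_1^{2i}w_{2m+4}$, and the other two terms vanish.
\item $2i\equiv 2$: then $2m\equiv 4$. This gives $w_1^{2i}w_4w_{2m}+w_1^{2i+2}w_{2m+2}$.
\item $2i\equiv 4$: then $2m\equiv 2$. Here $\binom{2m-1}{4}$ and $\Sq^2(w_1^{2i})$ both vanish, and $\Sq^4(w_1^{2i})=w_1^{2i+4}$. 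This gives $w_1^{2i}w_4w_{2m}+w_1^{2i+4}w_{2m}$.
\item $2i\equiv 6$: then $2m\equiv 0$. All three extra contributions survive, giving the fourth line of the statement.
\end{itemize}

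\textbf{Main obstacle.} The delicate point is the bookkeeping of the mixed terms with an odd-index class, such as $w_1w_{2m+1}$ and $w_1w_{2m+3}$. These are not in the ideal, so I must verify that their binomial coefficients are genuinely even. I will double-check these coefficients carefully with Lucas' theorem against the exact indexing convention of \eqref{eqn:Sqw}.
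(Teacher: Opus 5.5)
Your proposal is correct and follows essentially the same route as the paper's proof: Cartan expansion, \Cref{lem:SqSWClasses} to remove the $\Sq^1,\Sq^3$ terms, formula \eqref{eqn:Sqw} reduced modulo $w_2$ (the $w_1w_{2m+1}$ and $w_1w_{2m+3}$ terms die by even binomial coefficients, and the $w_3$ term dies either way), then a case check on the parities of $\binom{2m-1}{2}$ and $\binom{2m-1}{4}$. The paper differs only in treating $2m=2$ separately (there $\Sq^4 w_2=0$ and both sides lie in $(\mathcal{A}w_2)$) instead of applying \eqref{eqn:Sqw} and Lucas' theorem with $i>j$ and negative binomial tops; you should add a line for $2m=2$, and likewise for $2m=0$, where directly $\Sq^4(w_1^{2i})=w_1^{2i+4}$ agrees with the right-hand side modulo $w_2$.
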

		\begin{proof}
			
			If $2m=2$ then $2i\equiv 4\pmod{8}$ and by the Cartan formula $\Sq^4(w_1^{2i}w_2)\in(\mathcal{A}w_2)$, but in this case also $w_1^{2i}w_4w_{2}+w_1^{2i+4}w_{2}\in (\mathcal{A}w_2)$, hence the formula holds.

			Next we assume $2m\geq 4$. We will make the following calculation in $H^*(\BO;\Z/2)$ 
			mod $(\mathcal{A}w_2)$, using \Cref{lem:SqSWClasses}, the Cartan formula, \cref{eqn:Sqw} and \cref{tab:binom} of binomial coefficients.
			
			\renewcommand{\arraystretch}{1.4}
			\begin{table}[h!]
				\centering
				\begin{tabular}{c|c|c|c|c|c|c|c|c}
					$i \pmod8=$ & $0$ & $1$ & $2$ & $3$ & $4$ & $5$ & $6$ & $7$ \\ \hline 
					$i\choose 0$&1&1&1&1&1&1&1&1\\ \hline
					$i\choose 1$&0&1&0&1&0&1&0&1\\ \hline
					$i \choose 2$ &$0$&$0$&$1$&$1$&$0$&$0$&$1$&$1$  \\ \hline
					$i \choose 3$ &$0$&$0$&$0$&$1$&$0$&$0$&$0$&$1$ \\ \hline
					$i \choose 4$ &$0$&$0$&$0$&$0$&$1$&$1$&$1$&$1$
				\end{tabular}
				\caption{For convenience of the reader, we collected some binomial coefficients modulo 2. They only depend on $i$ modulo $8$. This follows immediately from Lucas' theorem.}
				\label{tab:binom}
			\end{table}

						\begin{align*}
				&\Sq^4(w_1^{2i}w_{2m})\hspace{.5cm}=\hspace{.5cm}w_1^{2i}\Sq^4(w_{2m})+\Sq^2(w_1^{2i})\Sq^2(w_{2m})+\Sq^4(w_1^{2i})w_{2m}\\
				& = w_1^{2i}\sum_{k=0}^4 {{2m+k-5}\choose{k}}w_{4-k} w_{2m+k}
				+ \Sq^2(w_1^{2i})\sum_{k=0}^2 {{2m+k-3}\choose{k}}w_{2-k} w_{2m+k}
				+\Sq^4(w_1^{2i})w_{2m}\\
				&\stackrel{(w_2\equiv0)}{=}\left(w_1^{2i}w_4w_{2m}+{{2m-1}\choose{4}}w_1^{2i}w_{2m+4}\right)+\Sq^2(w_1^{2i}){{2m-1}\choose{2}}w_{2m+2}+\Sq^4(w_1^{2i})w_{2m} \\
				& = w_1^{2i}w_4w_{2m}+
				\left\{\!\begin{aligned}
					&w_1^{2i}w_{2m+4} &\text{ if }  2m\equiv 0,6\text{ (mod 8)} \\
					&0 &\text{ if } 2m\equiv 2,4\text{ (mod 8)} 
				\end{aligned}\right\}\\
				&
				+
				\left\{\!\begin{aligned}
					&w_1^{2i+2}w_{2m+2} &\text{ if }  2m\equiv 0,4\text{ (mod 8)} \text{ and } i \text{ odd}\\
					&0 &\text{ if } 2m\equiv 2,6 \text{ (mod 8)}\;\, \text{ or } i \text{ even}  \end{aligned}\right\}
				+ 
				\left\{\!\begin{aligned}
					&w_1^{2i+4}w_{2m} &\text{ if }  2i\equiv 4,6\text{ (mod 8)}\\
					&0 &\text{ if } 2i\equiv 0,2\text{ (mod 8)}
				\end{aligned}\right\}.
			\end{align*}
			Here we used that $w_2\in (\mathcal{A}w_2)$ and that ${d\choose 1}=0$ and ${d\choose 3}=0 \pmod{2}$ for even $d$.

			Filling in the condition $2m+2i\equiv 6 \pmod 8$ gives the desired formula.
			
		\end{proof}

		Fix a nonnegative integer $k$. For nonnegative integers $i$, define the elements
		\begin{equation}
			\mathfrak{r}_{2i} = 
			\begin{cases}
				w_1^{2i}w_{8k+2-2i}+w_1^{2i+4}w_{8k-2-2i} &  i\equiv 0\pmod{4}
				\\
				w_1^{2i+2}w_{8k-2i}+w_1^{2i+4}w_{8k-2-2i} &  i\equiv 1\pmod{4}
				\\
				0 & i\equiv 2 \pmod{4}
				\\
				w_1^{2i}w_{8k+2-2i}+w_1^{2i+2}w_{8k-2i} & i\equiv 3\pmod{4} 
				\\
			\end{cases}
		\end{equation}
		of $H^*(\BO;\Z/2)$, where by convention all negative Stiefel--Whitney classes are zero.

        \begin{lemma}\label{lem:mathfrakR_in_In}
			In $H^*(\BO;\Z/2)$ we have that $\mathfrak{r}_{2i}\in I_{8k+2} \;(\text{mod }(\mathcal{A}w_2))$ for all $i$.		\end{lemma}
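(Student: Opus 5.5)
The plan is to realise each $\mathfrak{r}_{2i}$, up to elements of $(\mathcal{A}w_2)$, as a single Dold generator of $I_{8k+2}$ (\Cref{thm:Dold}). Set $n=8k+2$ and $2m=8k-2-2i$. For $2m\geq 0$ take
\[
p=w_1^{2i}w_{2m}\in H^{n-4}(\BO;\Z/2),
\]
and the generator $\Sq^4(p)+v_4\,p\in I_n$. Since $2i+2m=8k-2\equiv 6\pmod 8$, \Cref{it:4sqprop} applies, and it only remains to compute $v_4$ modulo $(\mathcal{A}w_2)$.

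First I compute $v_4$ from the recursion \cref{eq:RecursiveFormula}, which gives $v_4=w_4+\Sq^1v_3+\Sq^2v_2$. By \Cref{ex:SmallWu}, $v_3=w_1w_2\in(\mathcal{A}w_2)$ and $v_2=w_2+w_1^2$. Also $\Sq^2 w_2=w_2^2\in(\mathcal{A}w_2)$, and $\Sq^2(w_1^2)=w_1^4$. Hence
\[
v_4\equiv w_4+w_1^4 \pmod{(\mathcal{A}w_2)},
\]
and so $v_4\,p\equiv w_1^{2i}w_4w_{2m}+w_1^{2i+4}w_{2m}$.

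Adding this to the formula of \Cref{it:4sqprop} cancels the common term $w_1^{2i}w_4w_{2m}$. Using $2m+4=8k+2-2i$ and $2m+2=8k-2i$, the four cases become:
\begin{itemize}
\item For $i\equiv 0 \pmod 4$: $w_1^{2i}w_{8k+2-2i}+w_1^{2i+4}w_{8k-2-2i}$.
\item For $i\equiv 1 \pmod 4$: $w_1^{2i+2}w_{8k-2i}+w_1^{2i+4}w_{8k-2-2i}$.
\item For $i\equiv 2 \pmod 4$: the two copies of $w_1^{2i+4}w_{2m}$ cancel, giving $0$.
\item For $i\equiv 3 \pmod 4$: $w_1^{2i}w_{8k+2-2i}+w_1^{2i+2}w_{8k-2i}$.
\end{itemize}
These are exactly the four cases of $\mathfrak{r}_{2i}$, so $\mathfrak{r}_{2i}\in I_n$ modulo $(\mathcal{A}w_2)$.

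The main obstacle is the boundary values of $i$, where \Cref{it:4sqprop} is not literally available or the definition of $\mathfrak{r}_{2i}$ degenerates.
\begin{itemize}
\item The case $2m=2$ is covered by the lemma as stated.
\item The case $2m=0$, i.e.\ $i=4k-1$, is not covered, since the proof of \Cref{it:4sqprop} assumes $2m\geq 4$ apart from $2m=2$. I would handle it directly. \Cref{lem:SqSWClasses} gives $\Sq^4(w_1^{8k-2})=w_1^{8k+2}$, because $8k-2\equiv 6\pmod 8$. Then $\Sq^4p+v_4p\equiv w_1^{8k-2}w_4$. This agrees with $\mathfrak{r}_{8k-2}=w_1^{8k-2}w_4+w_1^{8k}w_2$ modulo $(\mathcal{A}w_2)$, since $w_1^{8k}w_2\in(\mathcal{A}w_2)$.
\item For $i=4k$, $\mathfrak{r}_{8k}=w_1^{8k}w_2$, which already lies in $(\mathcal{A}w_2)$.
\item For $i>4k$, every term of $\mathfrak{r}_{2i}$ contains a Stiefel--Whitney class of negative index, so $\mathfrak{r}_{2i}=0$ by convention.
\end{itemize}
With these edge cases checked, the lemma follows for all $i$.
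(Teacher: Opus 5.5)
Your proof is correct and follows the paper's argument. Both realise $\mathfrak{r}_{2i}$ modulo $(\mathcal{A}w_2)$ as the Dold generator $\Sq^4(p)+v_4\,p$ with $p=w_1^{2i}w_{8k-2-2i}$, using $v_4\equiv w_4+w_1^4$ and \Cref{it:4sqprop}; your explicit handling of the boundary cases $2m=0$ (not covered by the proof of \Cref{it:4sqprop}) and $i\geq 4k$ supplies details the paper leaves to ``direct inspection''.
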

		\begin{proof}
			Using the recursive equation (\ref{eq:RecursiveFormula}) expressing Wu classes in Stiefel-Whitney classes, we calculate that $v_4= w_1^4+w_4 + w_1\Sq^1w_2 + w_2^2  = w_1^4+w_4$ modulo $(\mathcal{A}w_2)$.
			We apply \cref{it:4sqprop} for $2m=8k-2i-2$ and obtain by direct inspection that for any value of $i$ 
			\begin{equation}
				\label{eq:mainRelation}
				\mathfrak{r}_{2i}\equiv \Sq^4(w_1^{2i} w_{8k-2i-2})+v_4w_1^{2i} w_{8k-2i-2} \ \in I_{8k+2} \quad (\text{mod } (\mathcal{A}w_2)).
			\end{equation}
		\end{proof}
		\begin{proof}[Proof of \Cref{prop:8kp2ToPSWClass}]
			We compute
			\begin{align*}
				\sum_{i=0}^{4k-1}\mathfrak{r}_{2i}&=
				\sum_{j=0}^{k-1} (\mathfrak{r}_{8j} +\mathfrak{r}_{8j+2} + \mathfrak{r}_{8j+4} + \mathfrak{r}_{8j+6}) 
				\\
				&=\sum_{j=0}^{k-1}w_1^{8j}(w_{8k+2-8j}+w_1^{8j+4}w_{8k-2-8j}+ w_1^{8j+4}w_{8k-8k-2}+w_1^{8k+6}w_{8k-4-8k}\\&\hspace{10mm} +w_1^{8j+6}w_{8k-8j-4}+w_1^{8j+8}w_{8k-8j-6})\\
				&= \sum_{j=0}^{k-1}(w_1^{8j}w_{8k+2-8j} + w_1^{8j+8}w_{8k-6-8j})
				\\
				&= w_{8k+2} + w_1^{8k} w_{2}
			\end{align*}
			
			By \Cref{lem:mathfrakR_in_In}, the elements $\mathfrak{r_{2i}}\in I_n$ and so $\sum_{i=0}^{4k-1}\mathfrak{r}_{2i}\in I_n$. It follows that $w_{8k+2}\in  I_n \pmod{(\mathcal{A} w_2)}$.
		\end{proof}

		\subsection{\texorpdfstring{$\Pin^-$}{Pin-}-manifolds in dimensions \texorpdfstring{$8k+4$}{8k+4}}
		
		In this section we prove the following.
		\begin{proposition}\label{prop:PinMinusEulerChar} 
			Every $Pin^-$ manifold of dimension $8k+4$ has vanishing top Wu class $v_{4k+2}$ and therefore even Euler characteristic.
		\end{proposition}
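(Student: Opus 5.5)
The plan is to work directly with the defining property of Wu classes on a closed $\Pin^-$-manifold $M$ of dimension $n=8k+4$, rather than with universal classes. Set $j=4k+2$, so that $v_{j}(M)$ is the top Wu class. It is characterised by $v_{j}x=\Sq^{j}x$ for all $x\in H^{j}(M;\Z/2)$. By Poincaré duality, $v_j(M)=0$ is equivalent to $\Sq^{4k+2}x=0$ for every such $x$. Once this holds, \Cref{lem:EquatioWuWn} gives $w_n(M)=v_{4k+2}(M)^2=0$, and \Cref{lem:EulerCharTopSWClass} then gives that $\chi(M)$ is even.

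The $\Pin^-$ condition gives $v_2(M)=w_2(M)+w_1(M)^2=0$ by \Cref{ex:SmallWu}. Since $v_2=0$, the Wu property says that $\Sq^2$ vanishes on $H^{n-2}(M;\Z/2)$, i.e.\ $\Sq^2$ composed to land in the top degree is zero. We also have $v_1=w_1$, so $\Sq^1 y=w_1y$ for $y\in H^{n-1}(M;\Z/2)$.

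The idea is to rewrite $\Sq^{4k+2}$, via Adem relations, as a sum of terms ending on the left with $\Sq^1$ or $\Sq^2$, and then kill each term.

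First, the Adem relation $\Sq^2\Sq^{4k}=\Sq^{4k+2}+\Sq^{4k+1}\Sq^1$ holds: the coefficients are $\binom{4k-1}{2}\equiv 1$ and $\binom{4k-2}{0}=1$. Combining it with $\Sq^{4k+1}=\Sq^1\Sq^{4k}$ gives
\[
\Sq^{4k+2}=\Sq^2\Sq^{4k}+\Sq^1\Sq^{4k}\Sq^1 .
\]
Applied to $x$ of degree $4k+2$, the first term lands in the top degree after an $\Sq^2$, so it vanishes because $v_2=0$. The second term is $\Sq^1$ of a class in degree $n-1$, so it equals $w_1\Sq^{4k}\Sq^1x$; this alone does not obviously vanish.

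The key step, and the one I expect to need care, is to handle $\Sq^1\Sq^{4k}\Sq^1=\Sq^{4k+1}\Sq^1$ by a second Adem relation:
\[
\Sq^2\Sq^{4k-1}=\Sq^{4k+1}+\Sq^{4k}\Sq^1 .
\]
Here one must check that the coefficient $\binom{4k-2}{2}=(2k-1)(4k-3)$ is odd and that the $j=0$ coefficient is $1$. Composing on the right with $\Sq^1$ and using $\Sq^1\Sq^1=0$ yields $\Sq^{4k+1}\Sq^1=\Sq^2\Sq^{4k-1}\Sq^1$. On $x$, this is again $\Sq^2$ applied to a class of degree $n-2$, hence zero since $v_2(M)=0$.

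Thus $\Sq^{4k+2}x=0$ for all $x$, so $v_{4k+2}(M)=0$, which completes the argument. The main obstacle is getting the binomial coefficients in the two Adem relations right mod $2$; this is where the congruence $n\equiv 4\pmod 8$ (as opposed to $0\pmod 8$) enters. It also explains why the analogous argument fails in dimensions $8k$, where $v_{4k}$ need not vanish.
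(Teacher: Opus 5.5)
Your proof is correct and is essentially the paper's argument: the same two Adem relations give $\Sq^{4k+2}=\Sq^2\Sq^{4k}+\Sq^2\Sq^{4k-1}\Sq^1$, so $v_2(M)=0$ together with Poincar\'e duality forces $v_{4k+2}(M)=0$. Your detour through $\Sq^{4k+1}=\Sq^1\Sq^{4k}$ is harmless but not needed, and you should note that these Adem relations require $k\ge 1$; for $k=0$ the top Wu class is $v_2$ itself, which vanishes by hypothesis.
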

		
		We will need the following technical lemma, see also \cite[Prop. 5.1]{ReneeEulerChar}.

		\begin{lemma}\label{lm:SqForPinMinus}
			For every integer $k>0$ we have:
			\[\Sq^{4k+2}=\Sq^2\Sq^{4k}+\Sq^2\Sq^{4k-1}\Sq^{1}.\]
		\end{lemma}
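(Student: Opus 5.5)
The plan is to verify the identity directly in the Steenrod algebra using the Adem relations. For $a<2b$ these read
\[
\Sq^a\Sq^b=\sum_{j=0}^{\lfloor a/2\rfloor}\binom{b-1-j}{a-2j}\Sq^{a+b-j}\Sq^j .
\]
We only need the case $a=2$, which gives
\[
\Sq^2\Sq^b=\binom{b-1}{2}\Sq^{b+2}+\binom{b-2}{0}\Sq^{b+1}\Sq^1 ,
\]
valid whenever $b\geq 2$. I would apply this to $b=4k$ and to $b=4k-1$. Both satisfy $2<2b$ because $k\geq 1$, so $4k-1\geq 3$.

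\emph{First term.} For $b=4k$ the coefficient is $\binom{4k-1}{2}=(4k-1)(2k-1)$, which is odd. Hence
\[
\Sq^2\Sq^{4k}=\Sq^{4k+2}+\Sq^{4k+1}\Sq^1 .
\]

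\emph{Second term.} For $b=4k-1$ the coefficient is $\binom{4k-2}{2}=(2k-1)(4k-3)$, again odd. Hence
\[
\Sq^2\Sq^{4k-1}=\Sq^{4k+1}+\Sq^{4k}\Sq^1 .
\]
Composing on the right with $\Sq^1$ and using $\Sq^1\Sq^1=0$ gives
\[
\Sq^2\Sq^{4k-1}\Sq^1=\Sq^{4k+1}\Sq^1 .
\]

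Adding the two expressions, the $\Sq^{4k+1}\Sq^1$ terms cancel mod $2$, leaving $\Sq^{4k+2}$, as claimed.

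The only real obstacle is the bookkeeping: checking the parities of the two binomial coefficients, and checking that the Adem relation applies, i.e.\ that $a<2b$ holds in both cases. Once the computation is arranged as above, both checks are immediate.
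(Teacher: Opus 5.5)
Your proof is correct and follows the paper's argument essentially verbatim. Like the paper, you use the two $a=2$ Adem relations for $\Sq^2\Sq^{4k}$ and $\Sq^2\Sq^{4k-1}$, compose the latter with $\Sq^1$ on the right using $\Sq^1\Sq^1=0$, and cancel the $\Sq^{4k+1}\Sq^1$ terms; your explicit parity checks of $\binom{4k-1}{2}$ and $\binom{4k-2}{2}$, and of the condition $a<2b$, are a bit more careful than the paper's.
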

		\begin{proof}
            Recall the Adem relations for Steenrod squares:
		\begin{equation}\label{eq:Adem}
			\Sq^a \Sq^b
			=
			\sum_{j=0}^{\lfloor a/2\rfloor}
			\binom{b-j-1}{a-2j}
			\Sq^{a+b-j}\Sq^j,
			\qquad \text{for } a<2b.
		\end{equation}
		
            In particular we have that
			\[\Sq^{2}\Sq^{4k}={4k-1\choose 2}\Sq^{4k+2}+{4k-2\choose 0}\Sq^{4k+1}\Sq^{1}=\Sq^{4k+2}+\Sq^{4k+1}\Sq^{1},\]
			so we obtain that $\Sq^{4k+2} = \Sq^{2}\Sq^{4k}+\Sq^{4k+1}\Sq^{1}$. By another Adem relation:
			$$\Sq^2\Sq^{4k-1}={4k-2\choose 2}\Sq^{4k+1}+{4k-3\choose 0}\Sq^{4k}\Sq^{1}=\Sq^{4k+1}+\Sq^{4k}\Sq^1,$$ we get $ \Sq^2\Sq^{4k-1}=\Sq^{4k+1}+\Sq^{4k}\Sq^{1}$, which we precompose by $\Sq^1$ to obtain
			
			\[\Sq^2\Sq^{4k-1}\Sq^1=\Sq^{4k+1}\Sq^1+\Sq^{4k}\Sq^{1}\Sq^1.\]
			  A simple application of another Adem relation gives  $Sq^1 Sq^1 = 0$ and we can calculate.
			\[\Sq^{4k+2} = \Sq^{2}\Sq^{4k}+\Sq^{4k+1}\Sq^{1} = \Sq^{2}\Sq^{4k}+\Sq^2\Sq^{4k-1}\Sq^1\]
			which concludes the proof.
			\end{proof}
	
		\begin{proof}[Proof of \Cref{prop:PinMinusEulerChar}]
            A manifold $M^n$ admits a tangential $\xi_{\Pin^{-}}$-structure if and only if the second Wu class $v_2(M)=w_1^2+w_2$ is zero, hence for any $x\in H^{n-2}(M;\Z/2)$, we have
        $    Sq^2(x) = v_2(M) x =0.$
                    For $n=8k+4$ and any $y\in H^{4k+2}(M;\Z/2)$, 
            \[
            v_{4k+2}(M)y = \Sq^{4k+2}(y)= \Sq^2 \left(\Sq^{4k} + \Sq^{4k+1}\Sq^1 \right)(y) = 0,
            \]
using \Cref{lm:SqForPinMinus}, hence it follows that $v_{4k+2}(M)$ vanishes. Finally, since $w_{8k+4}(M)=v_{4k+2}(M)^2$ we get that $w_{8k+4}(M)=0$ for all $\xi_{\Pin^-}$-manifolds $M$ and thus by \Cref{prop:ConditionEvenEuler} all $Pin^-$-manifolds have even Euler characteristic.
        
		\end{proof}

		\subsection{\texorpdfstring{$\Pin^+$}{Pin+}-manifolds in dimension \texorpdfstring{$8k+6$}{8k+6}}
		
		\begin{proposition}\label{prop:pinplusdim6}
			Every $\Pin^+$ manifold of dimension $n=8k+6$ has vanishing top Wu class $v_{4k+3}$ and therefore even Euler characteristic.
		\end{proposition}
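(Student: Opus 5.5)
The plan mirrors the proof of \Cref{prop:PinMinusEulerChar}: we express $\Sq^{4k+3}$ as a composite ending in a low Steenrod square whose Wu class vanishes on $\Pin^+$-manifolds. The relevant low Wu class is $v_3$. By \Cref{ex:SmallWu} we have $v_3=w_3+\Sq^1(w_2)=w_2w_1$. So for a $\Pin^+$-manifold $M^n$ (that is, $w_2(M)=0$) we get $v_3(M)=0$. By the defining property of Wu classes, this gives $\Sq^3(z)=v_3(M)z=0$ for every $z\in H^{n-3}(M;\Z/2)$.

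The key step is a factorisation $\Sq^{4k+3}=\Sq^3\circ\theta$ for some operation $\theta$ of degree $4k$, valid for $k>0$.
\begin{itemize}
\item First, the Adem relation gives $\Sq^1\Sq^{4k+2}=\binom{4k+1}{1}\Sq^{4k+3}=\Sq^{4k+3}$.
\item Next, \Cref{lm:SqForPinMinus} gives $\Sq^{4k+2}=\Sq^2\left(\Sq^{4k}+\Sq^{4k-1}\Sq^1\right)$.
\item Finally, the Adem relation $\Sq^1\Sq^2=\Sq^3$ combines these.
\end{itemize}
Together they yield
\[
\Sq^{4k+3}=\Sq^1\Sq^2\left(\Sq^{4k}+\Sq^{4k-1}\Sq^1\right)=\Sq^3\left(\Sq^{4k}+\Sq^{4k-1}\Sq^1\right).
\]

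Now let $n=8k+6$ and $y\in H^{4k+3}(M;\Z/2)$. Put $z=(\Sq^{4k}+\Sq^{4k-1}\Sq^1)(y)$, which lies in $H^{8k+3}(M;\Z/2)=H^{n-3}(M;\Z/2)$. Then
\[
v_{4k+3}(M)\,y=\Sq^{4k+3}(y)=\Sq^3(z)=v_3(M)\,z=0.
\]
By Poincaré duality this forces $v_{4k+3}(M)=0$.

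The case $k=0$ must be handled separately, since \Cref{lm:SqForPinMinus} requires $k>0$. Here it is immediate: $v_3(M)y=\Sq^3(y)$ for $y\in H^3(M^6;\Z/2)$, and $v_3(M)=w_2(M)w_1(M)=0$. With the top Wu class vanishing, \Cref{lem:EquatioWuWn} gives $w_{8k+6}(M)=v_{4k+3}(M)^2=0$. Then \Cref{lem:EulerCharTopSWClass} shows that $\chi(M)$ is even.

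The only step requiring care is the Adem bookkeeping. It consists of checking that $\binom{4k+1}{1}$ is odd and that $\Sq^1\Sq^2=\Sq^3$. Both are routine, so I expect no real obstacle. This is in contrast with the $8k+2$ case, where the top Wu class does not vanish.
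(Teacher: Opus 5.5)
Your proof is correct and is essentially the paper's argument: $v_3=w_1w_2$ vanishes on $\Pin^+$-manifolds, and $\Sq^{4k+3}$ factors as $\Sq^3$ after an operation of degree $4k$, so $v_{4k+3}(M)y=0$ for all $y$. The paper gets the factorisation more directly from the single Adem relation $\Sq^3\Sq^{4k}=\binom{4k-1}{3}\Sq^{4k+3}+\binom{4k-2}{1}\Sq^{4k+2}\Sq^1=\Sq^{4k+3}$. That route needs neither \Cref{lm:SqForPinMinus} nor a separate $k=0$ case, and your extra summand $\Sq^3\Sq^{4k-1}\Sq^1=\Sq^{4k+1}\Sq^1\Sq^1$ is in fact zero.
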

		\begin{proof}
			 A manifold $M^n$ admits a tangential $\xi_{\Pin^{+}}$-structure if and only if $w_2(M)=0$. We have $v_3 = w_1 w_2$, so $\xi_{\Pin^+}^* (v_3)=0$, from which it follows that $\Sq^3(x) = 0$ for any $x\in H^{n-3}(M;\Z/2)$.
By an Adem relation (\ref{eq:Adem}), we have for $k\geq 0$
			$$\Sq^{4k+3} = \Sq^3 \Sq^{4k}.$$
Let $n=8k+6$, and $y\in H^{4k+3}(M;\Z/2)$, then
  \[v_{4k+3}(M)y=\Sq^{4k+3}(y)=\Sq^3 \Sq^{4k}(y)=0,\]
  hence $v_{4k+3}=0$ and therefore $w_{8k+3}(M)=v_{4k+3}(M)^2=0$, thus all $(8k+6)$-dimensional $Pin^+$-manifolds have even Euler characteristic.
    \end{proof}

		\section{Application to the SKK-sequence}

		Recall the cut-and-paste groups $\SKK_n^{\xi}$ introduced in \Cref{subsec:SKKIntro}. The $\SKK_n^{\xi}$ groups fit into the following short exact sequence, see  \cite{skbook, gmtw, genauer, KST, HSV2025}.
		
		\begin{theorem}
			\label{thm:TheExactSequence}
			Let $\xi\colon B\to \BO$ be a stable tangential structure.
			Then the canonical map $\SKKxi_n \to \Omega^{\xi}_n$ is well-defined and yields an exact sequence
			\begin{equation} \label{seq:theSesSKK}
				\begin{tikzcd}
					0\ar[r]&\langle S_b^n \rangle_{\SKKxi_n} \ar[r,"s"]&\SKK_n^{\xi}\ar[r]& \Omega^\xi_n \ar[r]& 0,
				\end{tikzcd}
			\end{equation}
			where $\langle S^n_b\rangle$ is a cyclic group and the map $s$ sends a generator to the sphere $S^n$ with the unique $\xi$-structure that extends to the disk $D^{n+1}$.
			
			If $n$ is even $\langle S_b^n \rangle_{\SKKxi_n} \cong \Z$, and if $n$ is odd then
			\begin{enumerate}[label=(\roman*)]
				\item \label{it:ses1}$[S^n_b] = 0 \in \SKKxi_n$ if there exists a closed $(n+1)$-dimensional $\xi$-manifold with odd Euler characteristic;
				\item\label{it:ses2}$\langle S_b^n \rangle_{\SKKxi_n} \cong \Z/2$ if all closed $(n+1)$-dimensional $\xi$-manifolds have even Euler characteristic.
			\end{enumerate}
		\end{theorem}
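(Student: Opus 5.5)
The plan has three parts: show that the map $\SKKxi_n \to \Omega^\xi_n$ is well defined and surjective, identify its kernel as the cyclic subgroup generated by $[S^n_b]$, and then compute the order of that generator, which is where the Euler characteristic enters.

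\textbf{Well-definedness and surjectivity.} The map sending a closed $\xi$-manifold to its bordism class respects the SKK relation. Indeed, $M_1\cup M_3 + M_2\cup M_4$ and $M_1\cup M_4 + M_2\cup M_3$ are $\xi$-bordant, via the standard bordism built from $M_i\times[0,1]$ glued along $\partial M_i\times[0,1]$ together with the collar of the common boundary. Surjectivity is tautological, since every closed $\xi$-manifold defines an SKK class.

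\textbf{The kernel.} Suppose $M=\partial W$ for a compact $\xi$-manifold $W$. Choose a handle decomposition of $W$ relative to the empty set. Each surgery on $M$ corresponding to a handle $D^k\times D^{n+1-k}$ replaces $S^{k-1}\times D^{n+1-k}$ by $D^k\times S^{n-k}$. The SKK relation applied to these pieces shows that each such surgery changes the class by a multiple of $[S^k\times S^{n-k}]-[S^n_b]$, up to sign. Each $S^k\times S^{n-k}$ is itself SKK-equivalent to a multiple of $S^n_b$: cut it into $D^k\times S^{n-k}\cup D^k\times S^{n-k}$ and regroup. Hence $\ker\subseteq\langle S^n_b\rangle$, and this subgroup is cyclic.

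\textbf{The order of $[S^n_b]$.} This is the key step. One needs an invariant, and the natural candidate is the Euler characteristic, which is additive under the SKK relation in the form $\chi(M_1\cup M_3)+\chi(M_2\cup M_4)=\chi(M_1\cup M_4)+\chi(M_2\cup M_3)$. For $n$ even, $\chi(S^n)=2$, so the map $\SKKxi_n\to\Z$ given by $\chi$ shows that $[S^n_b]$ has infinite order. For $n$ odd, $\chi$ vanishes on closed manifolds, so a different invariant is needed.

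\textbf{The odd case.} Given a closed $(n+1)$-dimensional $\xi$-manifold $N$, remove a disk to obtain $N^\circ$ with $\partial N^\circ=S^n$. Gluing and comparing the resulting pieces shows that $[S^n_b]$ is $\chi(N)$ times a universal relation, so $\chi(N)[S^n_b]=0$; combined with $2[S^n_b]=0$ (from $D^{n+1}$ and its reflection), an odd $\chi(N)$ gives $[S^n_b]=0$, which is (i).

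Conversely, to prove (ii) one needs a homomorphism $\SKKxi_n\to\Z/2$ that is nonzero on $S^n_b$ when all closed $(n+1)$-dimensional $\xi$-manifolds have even Euler characteristic. I would take the semicharacteristic-type invariant
\[
M\mapsto \chi(W)\bmod 2,
\]
for any $\xi$-nullbordism $W$ of a bordism-related representative. Two choices of $W$ glue along their common boundary to a closed $(n+1)$-dimensional $\xi$-manifold, which has even Euler characteristic by hypothesis, so the invariant is well defined. It is compatible with the SKK relation, and it equals $1$ on $S^n_b=\partial D^{n+1}$.

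\textbf{Main obstacle.} The hardest step is making this invariant well defined on all of $\SKKxi_n$, not just on nullbordant classes. That requires a splitting argument or a reduction to the kernel, which I expect is where the cited literature is needed.
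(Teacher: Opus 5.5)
The paper does not prove this theorem; it quotes it from \cite{skbook, gmtw, genauer, KST, HSV2025}. There it comes out of, for example, the long exact sequence $\SKK^\xi_{n+1}\xrightarrow{\chi}\Z\to\SKKxi_n\to\Omega^\xi_n\to 0$ of a cofibre sequence of Madsen--Tillmann spectra, which gives $\langle S^n_b\rangle\cong\Z/\chi(\SKK^\xi_{n+1})$ and hence (i) and (ii) at once. Your elementary cut-and-paste route is a legitimate alternative. Well-definedness, surjectivity and the even case are fine, but there are two genuine gaps.

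\textbf{The coefficient identity.} Your kernel step and the whole odd case rest on the identity $[\partial W]=\chi(W)\,[S^n_b]$ in $\SKKxi_n$ for compact $\xi$-manifolds $W^{n+1}$, and you never establish it. You stop at ``$S^k\times S^{n-k}$ is a multiple of $S^n_b$''. After that, ``$\chi(N)$ times a universal relation'' and ``$2[S^n_b]=0$ from $D^{n+1}$ and its reflection'' are assertions, not arguments. The missing computation goes as follows. Apply the SKK relation to the two halves $S^{k-1}\times D^{n-k+1}$ and $D^k\times S^{n-k}$ of $\partial(D^k\times D^{n-k+1})$. This gives $[S^{k-1}\times S^{n-k+1}]+[S^k\times S^{n-k}]=2[S^n_b]$, with the products carrying their bounding structures. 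By induction, $[S^k\times S^{n-k}]=(1+(-1)^k)[S^n_b]$. Hence attaching a $k$-handle changes $[\partial W]$ by exactly $(-1)^k[S^n_b]$, in step with $\chi(W)$. Taking $k=n$ with $n$ odd gives $[S^n\times S^0]=2[S^n_b]=0$. Applying the identity to a closed $N^{n+1}$, where $\partial N=\emptyset$, gives $\chi(N)[S^n_b]=0$, which is (i). Since kernel elements are differences $[M]-[M']$, you also want the relative version for a bordism from $M'$ to $M$.

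\textbf{Part (ii).} This is not proved, and it is the only place the hypothesis enters. An invariant defined only on nullbordant manifolds cannot show $[S^n_b]\neq 0$. A chain of SKK relations killing $S^n_b\sqcup X$ down to $X$ may pass through manifolds that are not nullbordant. You need a homomorphism out of all of $\SKKxi_n$, and no splitting is required; compare the open question in the paper for $\Pin^+$ in dimension $8k+1$. The construction runs as follows.

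\emph{Definition.} Fix a representative $R_\alpha$ of each $\alpha\in\Omega^\xi_n$, with $R_0=\emptyset$. Put $\phi(M)=\chi(W)\bmod 2$ for any $\xi$-bordism $W$ from $R_{[M]}$ to $M$. Your gluing argument, using $\chi(R_{[M]})=\chi(M)=0$, makes this well defined.

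\emph{Homomorphism.} $M\mapsto([M],\phi(M))$ is a monoid map into the central extension of $\Omega^\xi_n$ by $\Z/2$ with cocycle $c(\alpha,\beta)=\chi(\text{bordism from }R_{\alpha+\beta}\text{ to }R_\alpha\sqcup R_\beta)\bmod 2$.

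\emph{SKK invariance.} Write $\Sigma$ for the common boundary. The standard bordism from $M_1\cup M_3\sqcup M_2\cup M_4$ to $M_1\cup M_4\sqcup M_2\cup M_3$ has Euler characteristic $\sum_i\chi(M_i)-3\chi(\Sigma)=-\chi(\Sigma)=-2\chi(M_1)$, which is even. So the map respects the SKK relation.

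\emph{Conclusion.} Since $S^n_b=\partial D^{n+1}\mapsto(0,1)$, we get $[S^n_b]\neq0$. Together with $2[S^n_b]=0$, this gives (ii).
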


		So we obtain
		\begin{corollary}\label{cor:SKKPin}
			 Let $n\geq 1$ be an odd integer and let $\xi$ be $B\Pin^{+}\to \BO$ or $B\Pin^{-}\to \BO$. Then we get that 
			\begin{eqnarray*}
			\SKK_n^{Pin^-} &\cong&     \begin{cases}
            \Omega_{n}^{\Pin^-}	&\text{ if } n=1,5,7 \pmod{8}\\
				\Omega_{n}^{\Pin^-}\oplus\Z/2& \text{ if }  n=3 \pmod{8}
			\end{cases}\\
\SKK_n^{Pin^+} &\cong& \begin{cases}
				\Omega_{n}^{\Pin^+}&\text{ if }n=3,7 \pmod{8}\\
				\Omega_{n}^{\Pin^+}\oplus\Z/2& \text{ if }n=5 \pmod{8} 
\end{cases}
            \end{eqnarray*}
            and lastly for $\Pin^+$-manifolds in dimension $n= 1 \pmod{8}$ there is a short exact sequence $\Z/2\to\SKK_n^{\Pin^+}\to \Omega_n^{\Pin^+}$.
    
            In the above the $\Z/2$ summands are generated by the spheres $S^n_b$ with $\xi$-structures which extend to the $n+1$-disc. The maps $\SKK_n^{\xi}\to \Omega_n^{\xi}$ are given by the inclusion and in the split cases the map $\SKK^{\xi}_n\to \Z/2$ is given by the $\Z/2$-Kervaire semi-characteristic
            \[\kerv_{\Z/2}(M^{2k+1})=\sum_{i=0}^{k}\rank(H_i(M;\Z/2)).\]
		\end{corollary}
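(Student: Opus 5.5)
The corollary follows by combining \Cref{thm:TheExactSequence} with \Cref{mnthm:Mainthm}, applied in dimension $n+1$. Fix $n$ odd, so $n+1$ is even. \Cref{mnthm:Mainthm} tells us exactly when every closed $(n+1)$-dimensional $\xi$-manifold has even Euler characteristic:
\begin{itemize}
\item for $\Pin^-$, precisely when $n+1\equiv 4 \pmod 8$, i.e.\ $n\equiv 3$;
\item for $\Pin^+$, precisely when $n+1\equiv 2,6\pmod 8$, i.e.\ $n\equiv 1,5$.
\end{itemize}
In all other odd residues, Table \ref{table:StrucutresOfProjSpaces} supplies an $(n+1)$-manifold with odd $\chi$. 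By \ref{it:ses1} this gives $[S^n_b]=0$, so $\SKK_n^\xi\to\Omega_n^\xi$ is an isomorphism. In the cases $n\equiv 3$ ($\Pin^-$) and $n\equiv 1,5$ ($\Pin^+$), \ref{it:ses2} gives a short exact sequence $0\to\Z/2\to\SKK_n^\xi\to\Omega_n^\xi\to 0$ with $\Z/2$ generated by $S^n_b$. This already settles the $n\equiv 1\pmod 8$ $\Pin^+$ case.

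For the split cases, $n\equiv 3$ ($\Pin^-$) and $n\equiv 5$ ($\Pin^+$), I would construct a retraction $\SKK_n^\xi\to\Z/2$ using the $\Z/2$-Kervaire semi-characteristic $\kerv_{\Z/2}$. The steps are as follows.

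First, $\kerv_{\Z/2}$ is an SKK invariant. For odd-dimensional manifolds, the standard Mayer--Vietoris argument shows that gluing changes the semi-characteristic by a quantity depending only on the pieces and the boundary. Concretely, $\kerv(M_1\cup M_3)+\kerv(M_2\cup M_4)-\kerv(M_1\cup M_4)-\kerv(M_2\cup M_3)$ vanishes mod $2$. I would cite this from \cite{HSV2025} or \cite{skbook}, where $\kerv$ is shown to be an SKK invariant in odd dimensions.

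Second, $\kerv_{\Z/2}$ is additive, and $\kerv_{\Z/2}(S^n_b)=1$, since $H_0(S^n;\Z/2)=\Z/2$ and the intermediate groups vanish.

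So $\kerv_{\Z/2}$ defines a homomorphism $\SKK^\xi_n\to\Z/2$ that sends the generator of the kernel to $1$. This splits the sequence and gives $\SKK_n^\xi\cong\Omega_n^\xi\oplus\Z/2$.

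The main obstacle is the $\Pin^+$ case $n\equiv 1\pmod 8$, where the same splitting should be expected to fail. By the Lusztig--Milnor--Peterson formula, the difference between $\kerv_{\Z/2}$ and the rational semi-characteristic is $\langle w_{n-1}v_{(n+1)/2}\rangle$-type data, which detects non-bordism-invariance. For $\Pin^+$ in these dimensions, $v_{(n+1)/2}$ need not vanish: the Klein bottle phenomenon noted after \Cref{mnthm:Mainthm}. So $\kerv_{\Z/2}$ need not be a valid splitting there, which is why the statement only claims a short exact sequence. In the other split cases one should check that $\kerv_{\Z/2}$ itself is well defined on SKK classes, independent of bordism. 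This holds by the Mayer--Vietoris argument and does not require vanishing of Wu classes.
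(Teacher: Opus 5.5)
The first half of your argument is correct and matches the paper. The bookkeeping of residues via \Cref{mnthm:Mainthm}, the isomorphisms $\SKK_n^\xi\cong\Omega_n^\xi$ via \ref{it:ses1}, and the extension by $\Z/2$ via \ref{it:ses2} all go through. That includes the $n\equiv 1\pmod 8$ $\Pin^+$ case.

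\textbf{The gap is in the splitting.} You claim that $\kerv_{\Z/2}$ is an SKK invariant by Mayer--Vietoris alone, ``without vanishing of Wu classes''. This is false.

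\emph{What Mayer--Vietoris actually gives.} For $M_i\cup_\Sigma M_j$ with $n=2m+1$, it gives $\kerv_{\Z/2}(M_i\cup M_j)\equiv\kerv(M_i)+\kerv(M_j)+\sum_{l\le m}\dim H_l(\Sigma;\Z/2)+\dim(L_i\cap L_j)$, where $L_j=\ker(H_m(\Sigma;\Z/2)\to H_m(M_j;\Z/2))$. So the SKK defect is $\dim(L_1\cap L_3)+\dim(L_2\cap L_4)+\dim(L_1\cap L_4)+\dim(L_2\cap L_3)$ mod $2$, and this need not vanish.

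\emph{A direct counterexample.} Take $n=1$ and four boundary points $a,b,c,d$. Let $M_1=M_4$ be arcs pairing $\{ab,cd\}$, $M_2$ arcs pairing $\{ac,bd\}$, and $M_3$ arcs pairing $\{ad,bc\}$. Then $M_1\cup M_3$, $M_2\cup M_4$ and $M_2\cup M_3$ are single circles, while $M_1\cup M_4$ is two circles. The two sides of the relation therefore have $\kerv_{\Z/2}$ equal to $0$ and $1$.

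\emph{A counterexample inside the paper's framework.} $\RP^2$ is $\Pin^-$ with $\chi=1$, so $[S^1_b]=0$ in $\SKK_1^{\Pin^-}$ by \ref{it:ses1}. Yet $\kerv_{\Z/2}(S^1)=1$.

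\emph{Your proposal is also internally inconsistent.} Any homomorphism $\SKK_n^\xi\to\Z/2$ sending $[S^n_b]$ to $1$ is a retraction. So if $\kerv_{\Z/2}$ descended to $\SKK$ unconditionally, it would also split the $n\equiv 1\pmod 8$ $\Pin^+$ sequence. That contradicts the discussion after \Cref{lem:Pin+counterexamples}. The appeal to Lusztig--Milnor--Peterson and bordism invariance is beside the point, because a splitting is a map out of $\SKK$, not out of $\Omega$.

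\textbf{The missing input} is the vanishing of the top Wu class $v_{(n+1)/2}$ on $(n+1)$-dimensional $\xi$-manifolds. This is why \Cref{prop:PinMinusEulerChar} and \Cref{prop:pinplusdim6} prove $v_{4k+2}=0$, respectively $v_{4k+3}=0$, rather than merely $w_{n+1}=0$. The paper's proof of the splitting is then just \cite[Theorem B]{HSV2025} applied under this hypothesis.

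To see where the Wu class enters, suppose $M^{2m+1}=\partial W$. The long exact sequence of $(W,M)$ and Lefschetz duality give $\kerv_{\Z/2}(M)\equiv\chi(W)+\rank\lambda_W$, where $\lambda_W$ is the cup-product pairing on $H^{m+1}(W,M;\Z/2)$. Since $x\cup x=\Sq^{m+1}x=v_{m+1}(W)\cup x$ there, $v_{m+1}(W)=0$ makes $\lambda_W$ alternating, hence of even rank. That forces $\kerv_{\Z/2}(M)\equiv\chi(W)$. Without the hypothesis this fails: the $\Pin^-$ Möbius band has $\chi=0$ but bounds $S^1$, which has $\kerv_{\Z/2}=1$.

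For $\Pin^+$ in dimension $8k+2$ the hypothesis genuinely fails (\Cref{lem:Pin+counterexamples}). That, not non-bordism-invariance, is why the $n\equiv 1\pmod 8$ case is only stated as an extension.
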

		\begin{proof}
			That $\SKK^{\xi}_n$ is isomorphic to either a bordism group or to an extension of the bordism group by $\Z/2$ follows from \Cref{mnthm:Mainthm} and \Cref{thm:TheExactSequence}. The fact that in the $8k+5$ dimensional $\Pin^+$ case and the $8k+3$ dimensional $\Pin^-$ case is split by the Kervaire semi-characteristic follows from \cite[Theorem B]{HSV2025} since in \Cref{prop:pinplusdim6} and \Cref{prop:PinMinusEulerChar} we show that in those cases the top Wu classes vanish.
		\end{proof}

        \begin{lemma}\label{lem:Pin+counterexamples}
            In every dimension $n=8k+2$ there exists a $\Pin^+$ manifold $M^{8k+2}$ such that $v_{4k+1}(M)\neq 0$, even though $w_{8k+2}(M)=0$. 
        \end{lemma}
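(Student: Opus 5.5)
We take the product $M = K \times \HP^{2k}$, where $K$ is the Klein bottle, and show that its top Wu class is a cross product of two nonzero classes. The vanishing of $w_{8k+2}(M)$ then needs no separate argument: once $M$ is known to be a $\Pin^+$-manifold of dimension $8k+2$, it follows from \Cref{thm:8kp2PinPlus}, or rather from its proof via \Cref{prop:8kp2ToPSWClass}, which shows $w_n(M)=0$ and not merely that $\chi(M)$ is even. So the content of the lemma is to exhibit a $\Pin^+$-manifold with $v_{4k+1}\neq 0$.

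\emph{Step 1: $M$ is $\Pin^+$.} The dimension of $M$ is $2+8k$. Since $\HP^{2k}$ has cohomology only in degrees divisible by $4$, we get $w_1(\HP^{2k})=w_2(\HP^{2k})=0$. For the Klein bottle, $w_2(K)=0$ because $\chi(K)=0$. By the Whitney product formula,
\[w_2(M)=w_2(K)\times 1+w_1(K)\times w_1(\HP^{2k})+1\times w_2(\HP^{2k})=0,\]
so $M$ admits a $\Pin^+$-structure.

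\emph{Step 2: the total Wu class is multiplicative.} We have $v(M)=v(K)\times v(\HP^{2k})$. To see this, pair $\Sq$ against cross products: by the Cartan formula, $\Sq(a\times b)=\Sq a\times\Sq b$. Combining this with the defining property of Wu classes and Künneth, the class $v(K)\times v(\HP^{2k})$ satisfies the defining identity for $v(M)$, so it equals $v(M)$ by uniqueness. The Wu classes of $K$ vanish above degree $1$, and those of $\HP^{2k}$ vanish above degree $4k$. Hence the only term of $v(M)$ in degree $4k+1$ is
\[v_{4k+1}(M)=v_1(K)\times v_{4k}(\HP^{2k}).\]

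\emph{Step 3: both factors are nonzero.} First, $v_1(K)=w_1(K)\neq 0$ since $K$ is non-orientable. Second, let $u\in H^4(\HP^{2k};\Z/2)$ be the generator. Then $\Sq^{4k}(u^k)=(u^k)^2=u^{2k}$, which generates the top-degree cohomology. Hence $v_{4k}(\HP^{2k})\,u^k\neq 0$, and so $v_{4k}(\HP^{2k})=u^k\neq 0$. By the Künneth theorem with field coefficients, the cross product of two nonzero classes is nonzero, which gives $v_{4k+1}(M)\neq 0$.

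The only mildly delicate point is Step 2, the multiplicativity of Wu classes under products. If one prefers to avoid it, the same conclusion can be reached directly: evaluate $\Sq^{4k+1}$ on $y=a\times u^k$, where $a\in H^1(K;\Z/2)$ is chosen with $a\,w_1(K)\neq 0$. The Cartan formula and dimension counting leave only the term $\Sq^1 a\times \Sq^{4k}u^k=a^2\times u^{2k}$. On the Klein bottle, $a^2=\Sq^1a=v_1(K)a=w_1(K)a\neq 0$. Hence $\Sq^{4k+1}(y)\neq 0$, so $v_{4k+1}(M)\neq 0$. For $k=0$ the construction reduces to the Klein bottle itself, matching the example in the introduction.
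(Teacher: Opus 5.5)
Your proposal is correct and follows the paper's proof essentially verbatim. You use the same manifold $K\times\HP^{2k}$, the same Whitney-formula check that $w_2$ vanishes, and the same splitting $v_{4k+1}(M)=v_1(K)\times v_{4k}(\HP^{2k})$; the only cosmetic differences are that you certify $v_{4k}(\HP^{2k})=u^k\neq 0$ directly from $\Sq^{4k}u^k=u^{2k}$ rather than from $w_{8k}(\HP^{2k})\neq 0$, and that $w_{8k+2}(M)=0$ needs no appeal to \Cref{thm:8kp2PinPlus} since $\chi(M)=\chi(K)\chi(\HP^{2k})=0$.
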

        \begin{proof}
            Consider the manifold  $M:=K^2\times \HP^{2k}$ where $K^2$ is the Klein bottle. Now $w_2(M)= w_2(K)+ w_1(K) w_1(\HP^{2k}) + w_2(\HP^{2k}) = w_2(K)$ which vanishes since $w_2(K)=\chi(K)\pmod{2}=0$. Now $v_{4k+1}(M)=  v_1(K)v_{4k}(\HP^{2k})$. Since $\chi(\HP^{2k})$ is nonzero, $w_{8k}(\HP^{2k})\neq 0$ and so $v_{4k}(\HP)\neq 0$. Furthermore $w_1(K)=v_1(K)\neq 0$ which proves  the claim.
        \end{proof}
         Consider \cite[Proposition 4.20]{HSV2025} which says that if the SKK sequence (\ref{seq:theSesSKK}) in the dimension $8k+1$ for $\Pin^+$ is split, then the splitting has to non-trivially depend on the $\Pin^+$-structures (in other words, no purely topological invariant $\SKK_{8k+1}^{\Pin^+}\to \Z/2$ can be a splitting). It follows from this fact and \Cref{lem:Pin+counterexamples} that the $\Z/2$-Kervaire semi-characteristic {\it does not} split the SKK-sequence (\ref{seq:theSesSKK}). Thus we can ask:
\begin{question}
   Does the exact sequence
    \begin{equation}\label{eq:Pinskk}
            0 \to \Z/2 \to \SKK^{\Pin^+}_{8k+1} \to \Omega^{\Pin^+}_{8k+1} \to 0
        \end{equation}

    split for all $k \geq 0$? 
\end{question}
The sequence splits trivially for $k=0,1$ since $\Omega^{\Pin^+}_{1}=0$ and $\Omega^{\Pin^+}_{9}=0$ \cite{kirbytaylor}.

\begin{lemma}
    If there exists a $k$ for which \cref{eq:Pinskk} does not split, then for any $k'\geq k$, it also does not split.
\end{lemma}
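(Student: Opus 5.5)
We argue by contraposition: if the sequence \eqref{eq:Pinskk} splits in dimension $8k'+1$, then it splits in every lower dimension $8k+1$. By induction it is enough to pass from $k'=k+1$ to $k$. The tool is taking products with a closed $8$-dimensional $\Pin^+$-manifold $B$ that has odd Euler characteristic and is simply connected. We take $B=\HP^2$: it is $\Pin^+$ because all its Stiefel--Whitney classes in degrees $1,2$ vanish, and $\chi(\HP^2)=3$.

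The first step is to check that $M\mapsto M\times \HP^2$ induces a homomorphism $P\colon \SKK^{\Pin^+}_{8k+1}\to \SKK^{\Pin^+}_{8k+9}$. Products of $\Pin^+$-manifolds with a fixed closed spin manifold are $\Pin^+$. The product also respects the cut-and-paste relation: crossing a decomposition $M_1\cup M_3$ with $\HP^2$ gives the decomposition $(M_1\times\HP^2)\cup(M_3\times\HP^2)$, glued along $\partial M_1\times \HP^2$. By the same reasoning, $P$ is compatible with the bordism map, giving a homomorphism $\Omega^{\Pin^+}_{8k+1}\to\Omega^{\Pin^+}_{8k+9}$, so we obtain a map of short exact sequences.

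The second step is to identify the induced map on the kernels, $\langle S^{8k+1}_b\rangle\to\langle S^{8k+9}_b\rangle$. Both groups are $\Z/2$ by \Cref{thm:TheExactSequence} and \Cref{thm:8kp2PinPlus}. We need $[S^{8k+1}_b\times \HP^2]=[S^{8k+9}_b]\neq 0$ in $\SKK_{8k+9}^{\Pin^+}$. In \cite{HSV2025} the SKK class of a nullbordant manifold is determined by a Euler-characteristic-type invariant of a bounding manifold. Concretely, a manifold bounding $W$ has class $\chi(W)\cdot[S^n_b]$ up to sign, modulo the relation coming from closed manifolds, which here is mod $2$. The product $S^{8k+1}_b\times\HP^2$ bounds $D^{8k+2}\times \HP^2$, whose Euler characteristic is $\chi(\HP^2)=3$, which is odd. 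Hence the product is the nonzero element, so the map on kernels is an isomorphism $\Z/2\to\Z/2$.

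The final step is a diagram chase. Suppose $r\colon \SKK_{8k+9}^{\Pin^+}\to \Z/2$ is a retraction of the kernel inclusion. Then $r\circ P$ restricted to $\langle S^{8k+1}_b\rangle$ is the isomorphism from the second step, so composing with its inverse gives a retraction in dimension $8k+1$, and the lower sequence splits. Equivalently, non-splitting in dimension $8k+1$ forces non-splitting in dimension $8k+9$, and induction gives the claim for all $k'\geq k$. The main obstacle is the second step: making precise that the class of a boundary in $\SKK$ is governed by the parity of $\chi$ of a filling. If this formula is not available in \cite{HSV2025}, I would instead decompose $S^{8k+1}\times\HP^2$ by hand, using a cell decomposition of $\HP^2$ and SKK relations, into an odd number of copies of $S^{8k+9}_b$.
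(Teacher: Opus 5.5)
Your proof is correct and uses the same ingredients as the paper: crossing with an even-dimensional quaternionic projective space (which has odd Euler characteristic), and the formula $[\partial W]=\chi(W)[S^n_b]$, which is \cite[Lemma 3.13]{HSV2025} and settles the step you flagged as the main obstacle. The paper only packages the argument differently: it characterises non-splitting as $[S^{8k+1}_b]=2[X]$ for some $X$ and crosses a nullbordism of $X\sqcup X$ with $\HP^{2(k'-k)}$ in one go, whereas you transport a retraction down one dimension step at a time via the morphism of extensions induced by $-\times\HP^2$.
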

\begin{proof}
    If for an integer $k$  \cref{eq:Pinskk}
    is not split then there exists a manifold $X^{8k+1}$ with the property that $2[X]=[S_b^{8k+1}]$ in $\SKK^{\Pin^+}_{8k+1}$. Let $k'>k$. Consider the manifold $X'= X \times \mathbb{HP}^{2(k'-k)}$. Let $W$ be a nullbordism of $X \sqcup X$. Using \cite[Lemma 3.13]{HSV2025} we have that in $\SKK^{\Pin^+}_{8k+1}$
    \[2[X]=\chi(W)[S^{8k+1}]\]
    and so $\chi(W)\equiv 1\pmod{2}$. Then $W\times \HP^{2(k'-k)}$ is a nullbordism of $X'\sqcup X'$ and using the same result we have that in $\SKK^{\Pin^+}_{8k'+1}$
    \[
    [X' \sqcup X'] = \chi(W)\chi(\mathbb{HP}^{2(k'-k)})[S_b^{8k'+1}] \equiv [S_b^{8k'+1}],
    \]
    which proves the result.
\end{proof}

		\bibliography{biblio.bib}{}
		\bibliographystyle{amsalpha}
		
	\end{document}